\documentclass[11pt]{article}

\usepackage{tgpagella}
\usepackage[utf8]{inputenc}
\usepackage[T1]{fontenc}
\usepackage{amssymb}
\usepackage{amsfonts}
\usepackage{amsmath}
\usepackage{amsthm}
\usepackage{mathrsfs}
\usepackage{natbib}
\usepackage{comment}
\usepackage{xcolor}
\usepackage{ulem}
\usepackage{hyperref}


\newtheorem{theorem}{Theorem}
\newtheorem{proposition}[theorem]{Proposition}
\newtheorem{lemma}[theorem]{Lemma}

\newtheorem{corollary}[theorem]{Corollary}

\theoremstyle{definition}
\newtheorem{definition}[theorem]{Definition}
\newtheorem{convention}[theorem]{Convention}

\newtheorem{remark}[theorem]{Remark}

\newcommand{\PA}{\textnormal{PA}}

\newcommand{\set}[2]{\lbrace #1 \ \mid \ #2 \rbrace }
\newcommand{\res}{\upharpoonright}
\newcommand{\CT}{\textnormal{CT}}
\newcommand{\df}[1]{\textbf{#1}}
\newcommand{\num}[1]{\underline{#1}}

\newcommand{\INT}{\textnormal{INT}}

\newcommand{\ElDiag}{\textnormal{ElDiag}}
\newcommand{\val}[1]{{#1}^{\circ}}

\newcommand{\tuple}[1]{\langle #1 \rangle}
\renewcommand{\Pr}{\textnormal{Pr}}

\newcommand{\dom}{\textnormal{dom}}

\newcommand{\FV}{\textnormal{FV}}
\newcommand{\qcr}[1]{\ulcorner #1 \urcorner}

\newcommand{\lh}{\textnormal{lh}}

\newcommand{\FinSeq}{\textnormal{FinSeq}}
\newcommand{\SeqOInd}{\textnormal{SOInd}}
\newcommand{\SeqInd}{\textnormal{SInd}}
\newcommand{\PropRef}{\textnormal{PropRef}}
\newcommand{\PropSnd}{\textnormal{PropSnd}}
\newcommand{\Prop}{\textnormal{Prop}}
\newcommand{\GR}{\textnormal{GRef}}
\newcommand{\DC}{\textnormal{DC}}
\newcommand{\DCin}{\textnormal{DC-in}}
\newcommand{\DCout}{\textnormal{DC-out}}
\newcommand{\QFC}{\textnormal{QFC}}

\newcommand{\qfSent}{\textnormal{qfSent}}
\newcommand{\Tr}{\textnormal{Tr}}
\newcommand{\IDelta}{\textnormal{I}\Delta}

\newcommand{\REG}{\textnormal{REG}}

\newcommand{\LPA}{\mathscr{L}_{\PA}}
\newcommand{\form}{\textnormal{Form}}
\newcommand{\formSeq}{\textnormal{FormSeq}}
\newcommand{\Term}{\textnormal{Term}}
\newcommand{\Sent}{\textnormal{Sent}}

\newcommand{\ClTerm}{\textnormal{ClTerm}}

\newcommand{\ClTermSeq}{\textnormal{ClTermSeq}}

\newcommand{\SentSeq}{\textnormal{SentSeq}}
\newcommand{\SentSet}{\textnormal{SentSet}}
\newcommand{\Comp}{\textnormal{Comp}}
\newcommand{\Asn}{\textnormal{Asn}}
\newcommand{\Var}{\textnormal{Var}}

\title{The two halves of disjunctive correctness} 
\author{Cezary Cieśliński\footnote{Faculty of Philosophy, University of Warsaw.}, Mateusz Łełyk\footnote{Faculty of Philosophy, University of Warsaw.}, Bartosz Wcisło\footnote{Institute of Mathematics, Polish Academy of Sciences.}}

\begin{document}
	
	\maketitle

\begin{abstract}
	Ali Enayat had asked whether two halves of Disjunctive Correctness ($\DC$) for the compositional truth predicate are conservative over Peano Arithmetic. In this article, we show that the principle ``every true disjunction has a true disjunct'' is equivalent to bounded induction for the compositional truth predicate and thus it is not conservative. On the other hand, the converse implication ``any disjunction with a true disjunct is true'' can be conservatively added to $\PA$. The methods introduced here allow us to give a direct nonconservativeness proof for $\DC$.
\end{abstract}	

\section{Introduction}

The area of axiomatic truth theories analyses the concept of truth by studying first-order theories which try to capture various properties of this notion. These theories are formulated as follows: We choose a base theory strong enough to represent syntax  (this is typically Peano Arithmetic, $\PA$). To this theory, we add a fresh predicate $T(x)$ whose intended reading is ``$x$ is (a code of) a true sentence'' together with axioms governing the behaviour of that predicate. A notable example of such a theory is $\CT^-$ which stipulates that the truth predicate satisfies Tarski's compositional conditions for arithmetical sentences. For instance, a conjunction $\phi \wedge \psi$ is true iff both $\phi$ and $\psi$ are true. 

 If we add to $\CT^-$ full induction for the formulae containing the truth predicate, the resulting theory, called $\CT$, is not conservative over $\PA$, that is, it proves arithmetical theorems which cannot be demonstrated in $\PA$ itself. More specifically, we can show by induction on the lengths of proofs that every sentence provable in $\PA$ is true (a principle called global reflection over $\PA$) and, consequently, that $\PA$ is consistent.\footnote{This is one of the basic results of truth theory. A comprehensive introduction to the area, including this result, can be found in \citep{halbach}.} 
On the other hand, by a theorem of Kotlarski, Krajewski, and Lachlan (see \citep{kkl}) $\CT^-$ itself does not prove any new arithmetical theorems. 

In \citep{cies}, it was shown that already $\CT^-$ extended with a principle of propositional reflection ``sentences derived in propositional logic from true premises are true'' suffices to prove global reflection over $\PA$. In other words, an overtly nonconservative principle can be derived without explicitly assuming induction on a ground of a rather innocuous principle of overtly truth-theoretic nature. Subsequently, it turned out that a number of other truth-theoretic principles are not conservative over $\PA$ and are all equivalent to $\Delta_0$-induction for the compositional truth predicate ($\CT^-$ with $\Delta_0$-induction is called $\CT_0$).\footnote{ For the proof of nonconservativity of $\CT_0$, see \citep{wcislyk}.} The picture that emerged was that there seems to be a ``minimal natural'' nonconservative extension of $\CT^-$. 

The ``dividing line'' between conservative and nonconservative extensions of $\PA$ has been named ``Tarski boundary'' by Ali Enayat.\footnote{One can find more information on the Tarski boundary in \citep{cies_ksiazka}. A more concise discussion is also contained in \citep{lelyk_wcislo_studia} and \citep{lelyk_studia}.} One of the most striking results on Tarski boundary was obtained by  \cite{EnayatPakhomov}. It was shown that among theories equivalent to $\CT_0$ is $\CT^-$ together with the principle of disjunctive correctness, $\DC$, which states that any finite (but possibly nostandard) disjunction is true iff one of the disjuncts is true. This axiom appears to be a mild, natural, extension of the compositional clauses and yet turns out to carry the full strength of $\Delta_0$-induction. 

It was not clear whether this result can be pushed further in the following manner: Disjunctive correctness can be naturally split into two halves. The first half is a principle $\DCin$ saying ``a disjunction with true disjunct is true'' and the second is $\DCout$ which says ``a true disjunction has a true disjunct''. It has been asked in \citep{EnayatPakhomov} whether the second  of these principles can be added conservatively to $\PA$.\footnote{See Question 5.3 in \citep{EnayatPakhomov}. The conservativity of $\DCin$ was settled in December 2018 and stated in the formulation of that question, but the proof was not published.} 

In this article, we analyse both of the above principles. We show that over $\CT^-$, $\DCin$ gives rise to a conservative extension of $\PA$ while $\DCout$ yields $\Delta_0$-induction. The methods used for the nonconservativeness result yield a new, direct proof that $\DC$ is yet another incarnation of $\Delta_0$-induction for the formulae containing the truth predicate.

\section{Preliminaries} \label{sec_prelim}

We consider truth theories over Peano arithmetic, $\PA$, as our base theory.\footnote{The choice is motivated mostly by a certain tradition in the field. However, the results make sense and still hold true over much weaker theories, say $\IDelta_0 + \exp$.} It is an axiomatic theory in a language $\LPA = \{S, +, \times,0\}$ whose axioms consist of inductive definitions of addition and multiplication in terms of the successor function $S$ together with the full induction scheme. Although this theory overtly speaks of natural numbers, in fact it is strong enough to capture objects such as finite sets, finite sequences, or finite graphs. Crucially, $\PA$ is capable of expressing syntactic notions such as ``term'', ``formula'' or ``proof'' and proving basic facts about these notions such as ``a conjunction of two formulae is a formula.'' We assume that the reader is familiar with the coding of syntax and the basic metamathematics. This is discussed in many sources such as \citep{kaye}, Chapter 9 or \citep{hajekpudlak}, Chapter I, Section 1(d), pp.50--61. Throughout the article, we will explain those bits of notation that seem not to explain themselves. We provide a glossary of all formalised notions we are using throughout the paper in the appendix. 

\begin{convention} \label{conv_syntax} \
	\begin{itemize}
		\item We will often conflate G\"odel numbers of syntactic objects with those objects. We will also use formulae denoting syntactic expressions as if they were sets. For instance, we will write $\phi \in \Sent_{\LPA}$ rather than $\Sent_{\LPA}(\phi)$, where $\Sent_{\LPA}$ expresses that $\phi$ is (a G\"odel code of) an arithmetical sentence.
		\item 	Provably functional formulae will be used as if denoting actual functions. For instance, we will write $\FV(\phi)$ for the set of free variables of $\phi$.
		\item  In particular, we will often write the results of syntactic operations without explicitly mentioning the operations themselves. For instance, if $\phi, \psi \in \form_{\LPA}$, we will freely speak of the conjunction $\phi  \wedge \psi$ rather than ``the only $z$ such that $z$ is a conjunction of $\phi$, $\psi$.''
	\end{itemize}
\end{convention}
The research on Tarski boundary concerns extensions of the compositional truth theory. Let us define it. By writing $\num{n}$, we mean (the G\"odel code of) a \df{numeral} denoting the number $n$, that is,
\begin{displaymath}
	\qcr{\underbrace{S(S\ldots S(0) \ldots)}_{\textnormal{``$S$'' repeated $n$ times}}}.
\end{displaymath}
If $t$ is (a G\"odel code of) a closed arithmetical term, then by $\val{t}$ we mean the value of that term. Thus we have, for instance,
\begin{displaymath}
	\mathbb{N} \models \val{\qcr{S0 + S(S(0))}} = \val{\num{3}} = 3 .
\end{displaymath}
Note that a value of an arithmetical term can be computed in a primitive recursive way and the natural function computing it may be formalised in $\PA$. Now we can proceed to the actual definition.

\begin{definition} \label{def_CTminus}
	By $\CT^-$ (Compositional Truth) we mean a theory in the arithmetical language with a fresh unary predicate $T$ extending $\PA$ with the following axioms:
	\begin{itemize}
		\item $\forall x \ \Big( T(x) \rightarrow \Sent_{\LPA}(x)\Big).$
		\item $\forall s,t \in \ClTerm_{\LPA} \Big(T(s=t) \equiv \val{s} = \val{t}\Big).$
		\item $\forall \phi \in \Sent_{\LPA} \Big(T(\neg \phi) \equiv \neg T \phi\Big).$
		\item $\forall \phi,\psi \in \Sent_{\LPA} \Big(T(\phi \vee \psi ) \equiv T\phi \vee T \psi\Big).$ 
		\item $\forall \phi,\psi \in \Sent_{\LPA} \Big(T(\phi \wedge \psi ) \equiv T\phi \wedge T \psi\Big).$ 
		\item $\forall \phi \in \form_{\LPA} \forall v \in \Var \Big(\Sent_{\LPA}(\exists v \phi) \rightarrow  T(\exists v \phi) \equiv \exists x \ T\phi(\num{x})\Big).$
		\item $\forall \phi \in \form_{\LPA} \forall v \in \Var \Big(\Sent_{\LPA}(\forall v \phi) \rightarrow  T(\forall v \phi) \equiv \forall x \ T\phi(\num{x})\Big).$
		\item $\forall \phi \in \form_{\LPA} \forall \bar{s}, \bar{t} \in \ClTermSeq_{\LPA} \Big(\val{\bar{s}} = \val{\bar{t}} \rightarrow T\phi(\bar{s}) = T\phi(\bar{t}) \Big).$
	\end{itemize}
	Above, $\ClTermSeq_{\LPA}(x)$ is a formula expressing ``$x$ is a sequence of closed arithmetical terms.'' The rest of the notation should be self-explanatory and, as we remarked earlier, it is discussed in the appendix. 
	
	 By $\CT$, we mean $\CT^-$ with full induction for the extended language. By $\CT_0$, we mean $\CT^-$ with induction for $\Delta_0$ formulae containing the truth predicate. (Note that already $\CT^-$ contains full arithmetical induction.) 
\end{definition}

The last clause in the above axioms for $\CT^-$ is called \df{Regularity Axiom}, $\REG$. It is not included among the basic axioms in the standard presentations of this theory, like \citep{halbach} or \citep{cies_ksiazka}. The version of $\CT^-$ with $\REG$ appears, e.g., in \citep{enayatlelykwcislo} or \citep{lelyk_wcislo_local_collection}. Admittedly, the regularity axiom has a clearly different status than the rest of the axioms of $\CT^-$. We add it mostly for two (admittedly technical) reasons which we will explain in Section \ref{sec_regularity}.

Although $\CT^-$ seems to express the crucial properties of the truth predicate, it is not arithmetically stronger than $\PA$. The result was essentially proved by \cite{kkl}.\footnote{The original result concerns satisfaction classes over a purely relational language. This work has been extended in \citep{kaye} to languages with terms and in \citep{engstrom_thesis} to truth classes over languages with terms. Subsequently, \cite{enayatvisser2} introduced a new, elegant and flexible method of constructing satisfaction classes which allowed them to strengthen the previous results in a number of interesting ways. They worked again in purely relational languages. A version for functional languages can be found in \citep{cies_ksiazka}. A version of Enayat--Visser construction covering languages with functional symbols with the regularity axioms included is discussed in \citep{lelyk_wcislo_local_collection}. A reader may find a proof of a stronger result also in this article in Section \ref{sect_dcin}.
}
\begin{theorem}[Kotlarski--Krajewski--Lachlan] \label{tw_kkl}
	$\CT^-$ is conservative over $\PA$, i.e., for any sentence $\phi \in \LPA$ if $ \CT^- \vdash \phi$, then $\PA \vdash \phi$. 
\end{theorem}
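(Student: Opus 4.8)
The plan is to prove conservativity model-theoretically, through the construction of compositional truth classes, and I would begin by recasting the statement via the completeness theorem. Suppose $\phi \in \LPA$ is arithmetical and $\PA \nvdash \phi$; then there is a model $M \models \PA + \neg \phi$, which by L\"owenheim--Skolem may be taken countable. It suffices to produce an elementary extension $M^* \succeq M$ carrying a predicate $T$ with $(M^*, T) \models \CT^-$: since $\phi$ is arithmetical and $M^* \equiv M$, we retain $M^* \models \neg\phi$, so $(M^*, T)$ witnesses $\CT^- \nvdash \phi$. Thus the theorem reduces to the claim that \emph{every countable model of $\PA$ has an elementary extension expandable to a model of $\CT^-$}, i.e.\ that the $\LPA \cup \{T\}$-theory consisting of $\ElDiag(M)$ together with the compositional axioms is consistent.

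The heart of the matter is to build, on a suitable elementary extension, a class $T$ obeying the Tarskian clauses for \emph{all} coded sentences of the model, standard and nonstandard alike. The naive idea of deciding one sentence at a time fails, because the quantifier clauses are globally entangled: assigning the value $1$ to a (possibly nonstandard) sentence $\forall v\, \psi$ commits $T$ on the instance $\psi(\num{a})$ for \emph{every} element $a$, while assigning $1$ to $\exists v\, \psi$ demands an actual witness that may not yet exist in the model. The construction must therefore be coherent across the whole, internally coded, space of formulas at once. The cleanest route I would take is the Enayat--Visser method: express ``$T$ is a compositional truth class'' as a theory in a proof system internal to $M$ (so-called $M$-logic, in which conjunctions and disjunctions may be taken over $M$-coded index sets), show that this theory is consistent in that internal logic, and read off a model by a Henkin-style construction, the elementary extension supplying witnesses for the existential sentences declared true. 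Taking $M^*$ to be such a model yields the desired expansion; and since a countable recursively saturated $M$ is resplendent, the class can even be placed on $M$ itself.

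The main obstacle, and the genuine technical content, is exactly the presence of nonstandard sentences in the internal consistency step. A compositional truth class must satisfy the Tarskian conditions for coded sentences of nonstandard syntactic depth, for which external induction on complexity is unavailable and the naive bottom-up evaluation has no reachable base case; one must rule out that committing to compositional truth values along such nonstandard syntactic trees ever produces a contradiction. This is precisely what the satisfaction-class machinery delivers, whether through Lachlan-style combinatorics in the original argument of Kotlarski, Krajewski, and Lachlan or through the arithmetized completeness and cut-elimination argument of Enayat and Visser, and I expect establishing this local-to-global consistency to be where essentially all the work lies; once it is in place, the Henkin (or resplendence) step assembles the verdicts into the global class $T$ with no further difficulty.
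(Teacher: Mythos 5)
Your reduction is fine: conservativity does follow once every (countable) model of $\PA + \neg\phi$ has an elementary extension expandable to a model of $\CT^-$, and this is how the result is in fact obtained. But the proposal then stops exactly where the theorem begins. The entire mathematical content is the consistency of $\ElDiag(M)$ plus the compositional clauses for all $M$-coded formulae, and you explicitly defer it (``I expect establishing this local-to-global consistency to be where essentially all the work lies''). Asserting that ``the satisfaction-class machinery delivers'' this is citing the theorem, not proving it. Worse, your description of how that machinery works conflates three distinct proofs: the $M$-logic/Henkin route, where one proves consistency of the truth-theory in a proof system internal to $M$, is the \emph{original} Kotlarski--Krajewski--Lachlan argument (with Lachlan-style combinatorics as the hard consistency lemma), not Enayat--Visser; cut elimination is Leigh's proof, not Enayat--Visser's; and the actual Enayat--Visser method, which is what this paper uses (in strengthened form, for $\CT^- + \DCin + \INT$, in Section \ref{sect_dcin}), involves no internal proof system and no internal consistency lemma at all. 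Its key idea, absent from your sketch, is a plain compactness argument: one writes down $\ElDiag(M)$ together with one compositional axiom $\Comp(\phi)$ \emph{per formula} $\phi \in \form_{\LPA}(M)$, and verifies each finite subtheory inside $M$ itself, by defining a partial satisfaction relation by external recursion on the finite poset of the finitely many relevant (similarity classes of) formulae under the direct-subformula relation --- each clause only ties $\phi$ to its immediate subformulae, so finitely many clauses never force the unbounded descent along nonstandard syntactic trees that you rightly identify as the obstacle. Iterating and taking the union of the resulting elementary chain $(M_n, S_n)$, with $S_{n+1}$ compositional on all of $\form_{\LPA}(M_n)$, yields the full class; this is Lemma \ref{lem_induction_step_in_ev_chain}.

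A second, smaller gap: the $\CT^-$ of this paper includes the regularity axiom $\REG$ (and numeral-style quantifier clauses), so the conservativity proof must produce a \emph{regular} satisfaction class. This is not automatic --- in the compactness step one must organise formulae by their syntactic templates (Definition \ref{def_syntactic_template}) and define the approximating classes uniformly on whole similarity classes, precisely so that extensionally equivalent pairs receive the same verdict. Your sketch never mentions regularity, so even granting the consistency step it would only deliver the $\REG$-free version of the theorem.
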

The above result contrasts with the situation for $\CT$. By induction on the length of proofs, we can show that every proof in $\PA$ has true conclusion. In other words, we can prove in $\CT$ the following principle of \df{Global Reflection}, $\GR$:
\begin{displaymath}
	\forall \phi \in \Sent_{\LPA} \Big( \Pr_{\PA}(\phi) \rightarrow T \phi \Big).
\end{displaymath}
Notice that $\CT^- + \GR$ is clearly not conservative over $\PA$, since in particular $\neg T (0 \neq 0)$ is provable in the compositional theory $\CT^-$ and, by contraposition $\neg \Pr_{\PA}(0 \neq 0 )$. The latter is not provable in $\PA$ by G\"odel's second theorem. 

As we have already mentioned, a number of seemingly unrelated principles turned out to be equivalent to this  canonical nonconservative axiom $\GR$. One of them is $\Delta_0$-induction for the compositional truth predicate. Another principle which will play an important role in this article is \df{Propositional Reflection}, $\PropRef$, defined as follows:
\begin{displaymath}
	\forall \phi \in \Sent_{\LPA} \Big(\Pr^{T}_{\Prop}(\phi) \rightarrow T \phi \Big),
\end{displaymath}
where $\Pr_{\Prop}^{T}(x)$ means that $x$ is derivable in propositional logic from the set of premises $\Gamma$, such that $T(y)$ holds for each $y \in \Gamma$. As we have already noted, \cite{cies} showed that over $\CT^-$, $\PropRef$ is equivalent to $\Delta_0$ induction for the truth predicate. Subsequently, $\CT_0$ was shown by \cite{wcislyk} to be arithmetically equivalent to $\GR$ and then, in \cite{lelyk_thesis}, to be exactly the same theory as $\CT^- + \GR$. Another presentation of the last result can be also found in a recent preprint \citep{lelyk_global_reflection}.

Related to propositional reflection is the following principle of \df{Propositional Soundness}, $\PropSnd$: 
\begin{displaymath}
	\forall \phi \in \Sent_{\LPA} \Big(\Pr_{\Prop}(\phi) \rightarrow T\phi \Big).
\end{displaymath}
In effect, this axiom expresses that any arithmetical sentence which is a propositional tautology is true. It is still unknown whether $\CT^- + \PropSnd$ is conservative over $\PA$. In the next section, we shall present a partial result towards this problem. 

Now, let us turn to the main subject of our article. If $M \models \PA$ and $\bar{\phi} \in \SentSeq_{\LPA}(M)$ is a coded sequence of sentences, we can form their disjunction $\bigvee \bar{\phi}$, which we will also denote by $\bigvee_{i \leq c} \phi_i$ if the length of $\bar{\phi}$ is $c$. We always assume that in ``big disjunctions'' parentheses are grouped to the left, so that the following equality holds in $M$:
\begin{displaymath}
	\bigvee_{i \leq c+1} \phi_i = \bigvee_{i \leq c} \phi_i \vee \phi_{c+1}.
\end{displaymath}
In effect, $\bigvee_{i \leq c} \phi$ denotes the following formula:
\begin{displaymath}
	(((\phi_0 \vee \phi_1) \vee \ldots )\vee \phi_{c-1}) \vee \phi_c.
\end{displaymath}

The precise definition of how disjunctions over multiple disjuncts are parenthesised can actually matter in some cases. In the presence of $\Delta_0$-induction for the extended language, one can show that any two disjunctions with the same disjuncts are equivalent, no matter how the disjuncts are ordered and grouped together. However, this is not the case in pure $\CT^-$. The precise parenthesising can become relevant, since it dictates the relations between disjunctions over arbitrarily many disjuncts and the usual binary disjunctions. We will return to this issue in Section \ref{sect_parentheses}.

By \df{Disjunctive Correctness}, $\DC$, we mean the following axiom:
\begin{displaymath}
	\forall \bar{\phi} \in \SentSeq_{\LPA} \bigg(T \left( \bigvee \bar{\phi} \right)\equiv \exists i \leq \lh(\bar{\phi}) \ T \phi_i \bigg).
\end{displaymath}
Therefore, $\DC$ states that a finite (but possibly nonstandard) disjunction is true iff it has a true disjunct. Very surprisingly, $\DC$ is yet another incarnation of $\CT_0$ as shown by \cite{EnayatPakhomov}. 
\begin{theorem}[Enayat--Pakhomov] \label{th_dc_equiv_ct0}
	$\CT^- + \DC$ and $\CT_0$ are equivalent. 
\end{theorem}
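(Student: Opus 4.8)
The plan is to prove the two directions of the equivalence $\CT^- + \DC \equiv \CT_0$ separately. The easier direction is $\CT_0 \vdash \DC$. Working in $\CT_0$, I would fix a coded sequence $\bar{\phi}$ of length $c$ and prove by $\Delta_0$-induction on $k \leq c$ the statement $T\left(\bigvee_{i \leq k} \phi_i\right) \equiv \exists i \leq k \ T\phi_i$. The base case $k = 0$ is trivial, and the inductive step uses only the compositional clause for $\vee$ together with the left-grouping convention $\bigvee_{i \leq k+1} \phi_i = \left(\bigvee_{i \leq k} \phi_i\right) \vee \phi_{k+1}$: by the $\vee$-axiom, $T\left(\bigvee_{i \leq k+1}\phi_i\right) \equiv T\left(\bigvee_{i \leq k}\phi_i\right) \vee T\phi_{k+1}$, and applying the inductive hypothesis finishes the step. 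The formula being inducted on is $\Delta_0$ in the language with $T$ (the quantifier $\exists i \leq k$ is bounded and $T$ appears unquantified or under bounded quantifiers), so $\Delta_0$-induction suffices. Note this direction is essentially subsumed by the easier half $\DCout$ once I treat it below, but stating it cleanly here isolates what $\DC$ demands.

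The substantive direction is $\CT^- + \DC \vdash \CT_0$, i.e. deriving $\Delta_0$-induction for the extended language. Rather than arguing directly, I would route through a principle already known to be equivalent to $\CT_0$; the most convenient target is Propositional Reflection $\PropRef$ (or equivalently Global Reflection $\GR$), since the excerpt records that $\CT^- + \PropRef$ is equivalent to $\CT_0$. So it suffices to show $\CT^- + \DC \vdash \PropRef$. The key idea is that a propositional derivation of $\phi$ from premises $\Gamma$ (all of whose members are true) can be repackaged, provably in $\PA$, as a single disjunction: if $\phi$ follows in propositional logic from $\gamma_0, \dots, \gamma_{n}$, then the sentence $\neg\gamma_0 \vee \dots \vee \neg\gamma_n \vee \phi$ is a propositional tautology, hence a disjunction one of whose disjuncts must be ``forced'' true. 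Concretely, I would use $\DC$ to analyze the truth value of this big disjunction: since each $\gamma_i$ is true, by the compositional negation clause each $\neg\gamma_i$ is false, so by the $\DCout$ half of $\DC$ (a true disjunction has a true disjunct) applied contrapositively together with $\DCin$, the only disjunct that can carry the truth of the whole disjunction is $\phi$ itself — provided I can show the big disjunction is true. Establishing that the tautological disjunction is true is where I would lean on the fact that propositional tautologies are evaluated correctly; this may itself require an auxiliary induction along the construction of the tautology, or a separate appeal to the coded propositional semantics available in $\PA$.

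The hard part, I expect, is precisely this last point: showing within $\CT^- + \DC$ that the relevant (possibly nonstandard) tautological disjunction receives truth value ``true'', and more generally controlling the interaction between $\DC$ and nonstandard-length propositional derivations without already assuming the induction we are trying to prove. The subtlety flagged in the excerpt about parenthesization of big disjunctions is a real obstacle here: a propositional derivation does not hand us a disjunction pre-grouped to the left, so I would need a lemma, provable in $\CT^-$ or with a single application of $\DC$, that the truth value of a disjunction is invariant under regrouping its disjuncts — and such invariance is exactly what fails in pure $\CT^-$. The cleanest route is therefore to formalize, entirely inside $\PA$, a normal-form transformation turning any propositional proof into a left-associated disjunction of the negated premises and the conclusion, verify in $\PA$ that this transformation preserves provable equivalence, and only then invoke $\DC$ on the normalized sentence. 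Once $\PropRef$ is in hand, the equivalence with $\CT_0$ is immediate by the cited results, closing the loop.
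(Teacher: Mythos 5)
Your easy direction ($\CT_0 \vdash \DC$ by $\Delta_0$-induction on the length of the disjunction) is fine and matches the standard argument. The substantive direction, however, has a genuine gap at exactly the point you flag: you need the (possibly nonstandard-length) tautological disjunction $\neg\gamma_0 \vee \dots \vee \neg\gamma_n \vee \phi$ to be \emph{true}, and nothing in $\CT^- + \DC$ hands you this. The principle ``every arithmetical instance of a propositional tautology is true'' is $\PropSnd$, whose conservativity over $\PA$ is an open problem even for $\CT^-$ alone, so it is certainly not a free lemma here; truth of a tautology of \emph{standard} depth is provable compositionally, but your disjunction has nonstandardly many disjuncts and its tautologyhood cannot be unfolded compositionally. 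Worse, the one tool you do have is circular for this purpose: by $\DC$, the disjunction is true iff some disjunct is true, and since each $\gamma_i$ is true, each $\neg\gamma_i$ is false, so $\DC$ reduces the truth of the big disjunction precisely to the truth of $\phi$ --- the very thing you are trying to establish. Your proposed patch does not close the gap either: verifying in $\PA$ that a normal-form transformation preserves \emph{provable} equivalence does not let you transfer truth values under $T$, because ``provably equivalent sentences have the same truth value'' is a reflection-type principle unavailable in $\CT^-$ (it would essentially require the $\GR$/$\PropRef$ strength you are trying to derive).

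The paper's own direct proof of this theorem (Theorems \ref{th_dcout_implies_sind} and \ref{th_dc_implies_seqoind}) avoids tautologies entirely. From $\DCout$ alone it derives $\SeqInd$ via a Yablo-style sequence $\psi_0 := \phi_0$, $\psi_{j+1} := \neg\phi_{j+1} \rightarrow \bigvee_{i<j+1}\neg\psi_i$: the argument that every $\psi_j$ is true uses only the single compositional identity
\begin{displaymath}
	T \Bigl( \bigvee_{i \leq j+1} \eta_i \Bigr) \equiv T \Bigl( \bigvee_{i \leq j} \eta_i \Bigr) \vee T\eta_{j+1},
\end{displaymath}
i.e.\ peeling off the \emph{last} disjunct, and $\DCout$ then yields a contradiction from any false $\phi_j$. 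With $\SeqInd$ in hand, $\DC$ gives $\SeqOInd$, and $\SeqOInd$ gives $\PropRef$ by inducting along a proof whose steps are premises or modus ponens --- each step handled compositionally, never invoking truth of a nonstandard tautology. If you want to salvage your route through $\PropRef$, you must first manufacture some induction-like principle from $\DC$ itself (as the paper does with $\SeqInd$); reducing to tautology soundness moves the difficulty to a place where it is at least as hard as the original problem.
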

As we mentioned, $\CT_0$ is not conservative over $\PA$. Hence the following fact easily follows:
\begin{corollary}[Enayat--Pakhomov] \label{cor_dc_not_conservative}
	$\CT^- + \DC$ is not conservative over $\PA$. 
\end{corollary}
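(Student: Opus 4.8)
The plan is to derive Corollary~\ref{cor_dc_not_conservative} directly from Theorem~\ref{th_dc_equiv_ct0} by reducing the conservativity question for $\CT^- + \DC$ to that for $\CT_0$, and then invoking the already-established nonconservativity of $\CT_0$. Since the two theories are equivalent as sets of theorems (they prove exactly the same sentences in the common language $\LPA \cup \{T\}$, and in particular the same arithmetical sentences), it suffices to exhibit a single arithmetical sentence provable in $\CT_0$ but not in $\PA$.

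First I would recall that $\CT_0$ is arithmetically equivalent to $\CT^- + \GR$, as stated in the excerpt (via \citep{wcislyk} and \citep{lelyk_thesis}). Global reflection $\GR$ asserts $\forall \phi \in \Sent_{\LPA}(\Pr_{\PA}(\phi) \rightarrow T\phi)$. The key observation, already made in the discussion following the statement of $\GR$, is that $\CT^-$ proves $\neg T(0 \neq 0)$, because the compositional clauses for equality and negation force the sentence $0 \neq 0$ to be untrue. Combining this with the relevant instance of $\GR$ by contraposition, $\CT^- + \GR$ proves the arithmetical sentence $\neg \Pr_{\PA}(0 \neq 0)$, which is (equivalent to) the consistency statement $\Con(\PA)$.

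Next I would carry out the transfer. Since $\CT^- + \DC$ is equivalent to $\CT_0$ by Theorem~\ref{th_dc_equiv_ct0}, and $\CT_0$ proves the arithmetical sentence $\Con(\PA)$ by the preceding paragraph, we conclude $\CT^- + \DC \vdash \Con(\PA)$. But $\Con(\PA)$ is an arithmetical sentence that is not provable in $\PA$, by G\"odel's second incompleteness theorem (and using the fact that $\PA$ is consistent). Hence $\CT^- + \DC$ proves an arithmetical theorem beyond $\PA$, so it is not conservative over $\PA$, which is exactly the claim.

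There is essentially no obstacle here, since all the substantive work is contained in Theorem~\ref{th_dc_equiv_ct0} and in the prior nonconservativity of $\CT_0$, both of which we may assume. The only point requiring a touch of care is making explicit that ``equivalent theories'' in Theorem~\ref{th_dc_equiv_ct0} entails agreement on the arithmetical consequences, so that nonconservativity genuinely transfers; once that is granted, the corollary is immediate. For a self-contained flavor one could alternatively trace the chain $\CT^- + \DC \equiv \CT_0 \equiv_{\text{arith}} \CT^- + \GR \vdash \Con(\PA) \notin \PA$, but in the interest of brevity I would simply cite Theorem~\ref{th_dc_equiv_ct0} together with the already-noted nonconservativity of $\CT_0$ and conclude in a line or two.
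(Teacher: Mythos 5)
Your proof is correct and follows essentially the same route as the paper, which simply combines Theorem~\ref{th_dc_equiv_ct0} with the previously noted nonconservativity of $\CT_0$. Your additional step of exhibiting $\Con(\PA)$ (via $\GR$ and $\neg T(0 \neq 0)$) as an explicit arithmetical witness merely spells out what the paper leaves implicit.
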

Disjunctive correctness can be naturally split into two principles. The first is $\DCout$ (``a true disjunction has a true disjunct''):
\begin{displaymath}
	\forall \bar{\phi} \in \SentSeq_{\LPA} \Big(T\bigvee \bar{\phi} \rightarrow \exists i \leq \lh(\bar{\phi}) \ T \phi_i \Big).
\end{displaymath}
The second is $\DCin$ (``a disjunction with a true disjunct is true''):
\begin{displaymath}
	\forall \bar{\phi} \in \SentSeq_{\LPA} \Big( \exists i \leq \lh(\bar{\phi}) \  T \phi_i \rightarrow T\bigvee \bar{\phi} \Big).
\end{displaymath}
As we have mentioned, the status of $\DCout$ was not settled in \citep{EnayatPakhomov} and the claim that $\DCin$ is conservative was stated without a proof. The subsequent parts of this article will be devoted to  the analysis of both principles. 

Let us finish this section by summing up the positive results on equivalences of theories of truth relevant for this work:
\begin{theorem} \label{th_many_faces}
	The following theories are equivalent:
	\begin{enumerate}
		\item $\CT_0$.
		\item $\CT^- + \GR$.
		\item $\CT^- + \PropRef$.
		\item $\CT^-  + \DC$. 
	\end{enumerate}
\end{theorem}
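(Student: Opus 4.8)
The plan is to treat $\CT_0$ (item (1)) as a hub and to prove that each of the other three theories is equivalent to it; since provable equivalence of theories is transitive, establishing the three equivalences $(1)\Leftrightarrow(2)$, $(1)\Leftrightarrow(3)$, and $(1)\Leftrightarrow(4)$ yields the full result. Each equivalence splits into an ``easy'' direction, obtained by formalising an induction on the length of a derivation inside $\CT_0$, and a ``hard'' direction, which recovers $\Delta_0$-induction for the truth predicate from a reflection-type principle.

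The equivalence of items (1) and (4), i.e. $\CT_0 \equiv \CT^- + \DC$, is exactly the Enayat--Pakhomov result already stated as Theorem \ref{th_dc_equiv_ct0}, so it requires no further argument. For items (1) and (3): to see $\CT_0 \vdash \PropRef$, I would argue inside $\CT_0$ by $\Delta_0$-induction on the length of a propositional derivation of $\phi$ from premises all satisfying $T$; the inductive statement ``every line so far is true'' is $\Delta_0$ in the language with $T$, and the compositional clauses of $\CT^-$ verify that each propositional inference rule preserves truth. The converse $\CT^- + \PropRef \vdash \CT_0$ is the nontrivial half and is due to Cie\'sli\'nski \citep{cies}, which I would cite directly.

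For items (1) and (2): the direction $\CT_0 \vdash \GR$ is the formalised soundness argument sketched in the introduction. One shows, by a suitable induction on the length of a $\PA$-proof, that every line is true; this uses $\Delta_0$-induction twice, once for the induction on proof length and once to check that each (possibly nonstandard) instance of the arithmetical induction scheme is true. For the latter, from $T$ of the antecedent of an instance $[\psi(\num{0}) \wedge \forall x(\psi(x) \to \psi(Sx))] \to \forall x\, \psi(x)$ one derives $\forall n\, T\psi(\num{n})$ by $\Delta_0$-induction on $n$. The reverse direction $\CT^- + \GR \vdash \CT_0$ is the genuinely deep part, and I would invoke \citep{wcislyk} for the arithmetical equivalence together with \citep{lelyk_thesis} (see also \citep{lelyk_global_reflection}) for the stronger fact that the two theories literally coincide.

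The main obstacle is the pair of implications $\CT^- + \PropRef \vdash \CT_0$ and $\CT^- + \GR \vdash \CT_0$: extracting $\Delta_0$-induction for $T$-formulae out of a soundness or reflection statement is not a matter of unwinding the compositional axioms but relies on the constructions in the cited papers. By contrast, all the forward directions reduce to ordinary proof-by-induction-on-length arguments and are routine, provided one keeps the inductive formulae within $\Delta_0$ of the extended language and handles open formulae occurring in $\PA$-derivations via their closures.
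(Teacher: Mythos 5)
Your proposal is correct and matches the paper's treatment: Theorem \ref{th_many_faces} is stated there as a summary of known results, with the hard directions delegated to exactly the sources you cite --- \citep{cies} for $\PropRef$, \citep{wcislyk} together with \citep{lelyk_thesis} (and \citep{lelyk_global_reflection}) for $\GR$, and Theorem \ref{th_dc_equiv_ct0} of \cite{EnayatPakhomov} for $\DC$. Your sketches of the routine $\CT_0$-directions (induction on proof length, with the $\Delta_0(T)$-induction on $n$ to verify truth of nonstandard instances of the induction scheme) are the standard arguments and are sound.
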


\section{Yablo sequences and disjunctive correctness} \label{sec_dcout}

In this section, we prove that $\DCout$ is equivalent to $\CT_0$. The argument is indirectly inspired by the classical Visser--Yablo paradox: \footnote{See \citep{yablo}.} 
Consider the sequence of sentences $Y_n, n \in \mathbb{N}$, such that $Y_n$ says: ``some sentence $Y_k$ for $k > n$ is false.'' If for some $k$, $Y_k$ is false, then every sentence $Y_l$ for $l>k$ is true. However, if $Y_l$ is true, then there is some $m>l$ such that $Y_m$ is false contradicting the assumption. Hence, every sentence in the Yablo sequence is true. However, if some sequence in the Yablo sequence is true, then some has to be false, so they cannot be all true. We reach a contradiction.

Using a construction inspired by the Yablo sequence, we will show that $\DC$-out implies $\CT_0$. We will actually use two intermediate principles which are overtly related to $\Delta_0$-induction. By \df{sequential induction}, $\SeqInd$, we mean the following axiom:\footnote{A related principle of \df{Modus Ponens correctness} was introduced earlier by Ali Enayat in an unpublished note \citep{enayat_fine_tuning}. The principle states that if a conjunction of the implications $\phi_{i} \rightarrow \phi_{i+1}$ is true for $i = 0,1, \ldots, c$ and $\phi_0$ is true, then $\phi_{c+1}$ is true. However, as stated, this principle is conservative over $\PA$. Namely, it holds in any model in which all conjunctions of nonstandard length are false and by a construction very similar to that presented in Section \ref{sect_dcin}, for any completion $U$ of $\PA$, we can find models with this property satisfying $U$.}
\begin{displaymath}
	\forall s \in \FinSeq \ \Big(Ts_0 \wedge \forall i < \lh(s) - 1 \big(T s_i \rightarrow T s_{i+1} \big) \rightarrow \forall j < \lh(s)  \ Ts_j \Big)
\end{displaymath}
The \df{sequential order induction}, $\SeqOInd$, is a natural variant of the above principle:
\begin{displaymath}
	\forall s \in \FinSeq \ \Big(\forall j < \lh(s) \big((\forall i<j Ts_i) \rightarrow T s_j \big) \rightarrow \forall l < \lh(s)\  Ts_l \Big).
\end{displaymath}
As we already mentioned, $\SeqInd$ and $\SeqOInd$ are clearly related to $\Delta_0$-induction:
\begin{proposition} \label{prop_seqoind_equiv_ct0}
	$\CT^- + \SeqOInd$ and $\CT_0$ are equivalent.	
\end{proposition}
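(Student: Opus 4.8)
The plan is to prove the two implications separately, leaning on Theorem~\ref{th_dc_equiv_ct0} to keep the harder direction short. For $\CT_0 \vdash \SeqOInd$, I would read $\SeqOInd$ as nothing but an instance of complete (course-of-values) induction. Fix $s \in \FinSeq$ and assume its antecedent $\forall j < \lh(s)\,\big((\forall i < j\, Ts_i) \to Ts_j\big)$. Consider the formula $\chi(j) :\equiv \forall i < j\, Ts_i$, which is $\Delta_0$ in the language with $T$ (its only quantifier is bounded and $s$ enters as a parameter). By $\Delta_0$-induction on $j$ one shows $\chi(j)$ for all $j \le \lh(s)$: $\chi(0)$ is vacuous, and if $\chi(j)$ holds with $j < \lh(s)$, the antecedent gives $Ts_j$, so $\chi(j+1)$ holds. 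Instantiating at $j = \lh(s)$ yields the conclusion $\forall l < \lh(s)\, Ts_l$. This is the only appeal to induction for a formula containing $T$, so the argument already succeeds in $\CT_0$.

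For the converse, $\CT^- + \SeqOInd \vdash \CT_0$, I would proceed in two steps. First, I would note that $\SeqOInd$ implies the linear variant $\SeqInd$: if $s$ satisfies $Ts_0$ together with $\forall i < \lh(s)-1\,(Ts_i \to Ts_{i+1})$, then the antecedent of $\SeqOInd$ holds for $s$ — at $j=0$ because $Ts_0$, and at $j>0$ because $\forall i<j\, Ts_i$ yields $Ts_{j-1}$ and hence $Ts_j$ — so $\SeqOInd$ delivers $\forall l < \lh(s)\, Ts_l$. Second, I would use $\SeqInd$ to derive the full principle $\DC$ and then invoke Theorem~\ref{th_dc_equiv_ct0}.

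To obtain $\DC$ from $\SeqInd$, fix $\bar\phi \in \SentSeq_{\LPA}$ of length $c$ and introduce the coded sequence of partial disjunctions $D_k := \bigvee_{i \le k}\phi_i$ for $k < c$, which is arithmetically definable from $\bar\phi$. Since our convention groups disjunctions to the left, $D_{k+1}$ is literally $D_k \vee \phi_{k+1}$, so a single application of the compositional axiom for $\vee$ gives $TD_{k+1} \equiv TD_k \vee T\phi_{k+1}$; in particular membership of the $D_k$ in $T$ is monotone in $k$. For the ``in'' direction, if $T\phi_j$ holds for some $j < c$ then $TD_j$ holds, and $\SeqInd$ applied to the truncated sequence $\langle D_{j+k} : j+k < c\rangle$ propagates this up to $TD_{c-1} = T\bigvee\bar\phi$. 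For the ``out'' direction I would argue contrapositively: assuming $\neg T\phi_i$ for all $i<c$, the sequence $\langle \neg D_k : k < c\rangle$ satisfies the hypotheses of $\SeqInd$ (using $T(\neg D_k) \equiv \neg TD_k$ and the monotonicity relation), whence $\neg TD_k$ throughout, so $\neg T\bigvee\bar\phi$. Together these establish $\DC$, and Theorem~\ref{th_dc_equiv_ct0} closes the argument.

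I expect the genuine obstacle to be precisely the ``out'' direction for disjunctions of \emph{nonstandard} length: a bounded existential quantifier unfolds into a disjunction with possibly nonstandardly many disjuncts, and this is exactly what $\CT^-$ alone cannot control, which is why some non-conservative principle is unavoidable. The device that defeats it is the passage to the single $T$-atom $TD_k$ tracked along the definable sequence of partial disjunctions, reducing everything to the purely linear propagation supplied by $\SeqInd$ (hence by $\SeqOInd$). Finally, should one prefer to avoid citing Theorem~\ref{th_dc_equiv_ct0}, I would remark that the same reduction can be run directly: with $\DC$ in hand (and, via $T(\neg\phi)\equiv\neg T\phi$, its conjunctive dual), every $\Delta_0$ formula $\delta(x)$ in the language with $T$ collapses — by external induction on its fixed standard structure — to a single atom $T(\gamma(x))$ for a definable sentence $\gamma(x)$, so that $\CT^- + \DC \vdash \forall x\,(T\gamma(x) \equiv \delta(x))$; $\Delta_0$-induction for $\delta$ then follows from $\SeqInd$ applied to initial segments of $\langle \gamma(x) : x \in M\rangle$.
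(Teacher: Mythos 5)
Your proof is correct, but it takes a genuinely different route from the paper in the nontrivial direction. The paper proves $\CT^- + \SeqOInd \vdash \CT_0$ in one step: a propositional proof from true premises is a sequence of sentences to which $\SeqOInd$ applies directly (each line is true provided all earlier lines are), so $\SeqOInd$ yields $\PropRef$, and the known equivalence $\PropRef \equiv \CT_0$ (Cieśliński's result, cited as Theorem~\ref{th_many_faces}) finishes the job. You instead go $\SeqOInd \Rightarrow \SeqInd \Rightarrow \DC$ and then invoke the Enayat--Pakhomov Theorem~\ref{th_dc_equiv_ct0}. Your intermediate claim $\CT^- + \SeqInd \vdash \DC$ checks out and is worth noting: the $\DCin$ half is exactly the paper's Theorem~\ref{th_seqind_implies_dcin}, while your contrapositive derivation of $\DCout$ via the coded sequence $\langle \neg D_k \rangle$ is a clean direct argument (the paper reaches the same conclusion only indirectly, via outer disjunctions $\neg\bigvee_{i}\neg\phi_i$ in Theorem~\ref{th_sind_has_outer_disjunction} and Proposition~\ref{prop_outer_disjunctions}). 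The one substantive caveat is architectural rather than logical: Proposition~\ref{prop_seqoind_equiv_ct0} is the pivot by which the paper later gives a \emph{new, direct} proof of the Enayat--Pakhomov theorem (Theorem~\ref{th_dc_implies_seqoind} derives $\SeqOInd$ from $\DC$ and then cites this proposition), so proving the proposition by citing Theorem~\ref{th_dc_equiv_ct0} would make that advertised reproof circular in the context of the paper, even though it is unobjectionable as a standalone argument since Enayat and Pakhomov's proof is independent. Your closing remark already contains the repair --- running the collapse of $\Delta_0(T)$-formulae to single $T$-atoms directly, with $\SeqInd$ on initial segments of $\langle\gamma(\num{x})\rangle$ replacing the appeal to internal induction --- and that sketch is essentially sound (it is close in spirit to the second half of the Enayat--Pakhomov argument, cf.\ Remark~\ref{rem_dc_via_int}), though as written it is only a sketch; the paper's route through $\PropRef$ buys a two-line proof at the cost of a different black box, while yours buys a self-contained derivation of $\DC$ from $\SeqInd$ that the paper obtains only abstractly.
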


\begin{proof}
	$\CT_0$ clearly entails $\SeqOInd$. On the other hand, observe that $\SeqOInd$ implies $\PropRef$ which by Theorem \ref{th_many_faces} is equivalent to $\CT_0$. Working in $\CT^- + \SeqOInd$, fix a proof $(\phi_0, \ldots, \phi_c)$ in propositional logic from true premises. We can assume that the proof system is chosen so that for all $i$, either $\phi_i$ is an assumption of the proof or  $\phi_i$ is obtained by modus ponens from two formulas appearing earlier in the proof. In particular, for every $j$, if every formula $\phi_j$ for $j< i$ is true, then $\phi_i$ is true which, by $\SeqOInd$, implies that the conclusion of the proof is true. Thus $\PropRef$ holds.  
\end{proof}
Now, we get to the core argument of the article:
\begin{theorem} \label{th_dcout_implies_sind}
	Over $\CT^-$, $\DCout$ implies $\SeqInd$.
\end{theorem}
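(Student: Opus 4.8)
The plan is to prove the contrapositive locally: working in $\CT^- + \DCout$, I fix a finite sequence $s$ of length $c+1$, assume the hypotheses of $\SeqInd$, namely $Ts_0$ and $\forall i < c\,(Ts_i \to Ts_{i+1})$, and suppose toward a contradiction that $\neg Ts_j$ for some $j < \lh(s)$. The goal is then to manufacture a witness to a \emph{failed implication}, i.e. an index $m < c$ with $Ts_m \wedge \neg Ts_{m+1}$, since this directly contradicts the implication hypothesis. The reason $\DCout$ is the natural tool is that it is precisely a witness-extractor: from the truth of a (possibly nonstandard) disjunction it returns the index of a true disjunct, which is exactly the kind of ``search through a nonstandard range in one step'' that the naive inductive propagation along the chain cannot perform inside $\CT^-$.

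The construction mirrors the Visser--Yablo sequence, but anchored to $s$. Using the arithmetized recursion theorem I would define a coded sequence $\langle Y_k : k \le c \rangle$ of $\LPA$-sentences in which each $Y_k$ is built from the $s_i$ (and later entries of the sequence) using only the Boolean connectives, so that the binary compositional clauses of $\CT^-$ pin down each $TY_k$ as an explicit Boolean combination of the values $Ts_i$. The design goal is to arrange that the truth of a single nonstandard disjunction $\bigvee \bar\psi$ assembled from these sentences encodes the statement ``some node of the chain is dark'', while its truth can be \emph{certified} using nothing beyond the binary disjunction clause together with the one fact we know outright, $Ts_0$. Applying $\DCout$ to $\bigvee \bar\psi$ then returns a true disjunct $\psi_m$, and unwinding $\psi_m$ through the compositional clauses exhibits the drop $Ts_m \wedge \neg Ts_{m+1}$ (or, in the boundary case, refutes $Ts_0$), completing the contradiction. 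A naive version of this idea tries to take the \emph{least} dark index, whose defining sentence $\neg s_k \wedge \bigwedge_{i<k} s_i$ would immediately produce a drop at $k-1$; but that sentence involves a nonstandard conjunction, which is exactly what the self-referential Yablo recursion is meant to replace.

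The hard part---and the place where the two halves of disjunctive correctness genuinely part ways---is certifying that the disjunction fed to $\DCout$ is \emph{true} while using only the outer half. Since $\DCin$ is unavailable (indeed it is the conservative half, so it cannot be silently used), I may not pass from ``one disjunct is true'' to ``the disjunction is true''. The only truth-certification of a nonstandard disjunction that $\CT^-$ supplies for free is via its \emph{last} disjunct: by the left-to-right grouping convention $\bigvee_{i \le c+1}\phi_i = \bigl(\bigvee_{i \le c}\phi_i\bigr) \vee \phi_{c+1}$, so a single application of the binary clause gives $T\bigl(\bigvee_{i\le c+1}\phi_i\bigr)$ from $T\phi_{c+1}$. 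Climbing a nonstandard chain of such steps would reintroduce induction, so the entire subtlety is to shape the Yablo sequence and its parenthesization so that the desired disjunction is certified true in a bounded number of compositional steps from $Ts_0$, with the genuine ``search'' over the nonstandard range delegated to the one permitted call to $\DCout$. Getting this bookkeeping to close without covertly invoking $\DCin$ or bounded induction is the crux of the argument.
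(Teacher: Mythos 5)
Your framing is right in several respects---$\DCout$ as a witness extractor, a Yablo-style auxiliary sequence, and the observation that the left-grouping convention makes $T\left(\bigvee_{i\le j+1}\eta_i\right) \equiv T\left(\bigvee_{i\le j}\eta_i\right) \vee T\eta_{j+1}$ the only compositional lever available (this is exactly the fact the paper's proof isolates)---but the proposal stops precisely where the proof has to happen: you state design goals for the sequence $\langle Y_k\rangle$ and then concede that making the certification close ``is the crux,'' without producing the construction. The missing idea is the specific shape of the auxiliary sentences, together with the realisation that their truth requires no certification-from-$Ts_0$ at all. Set $\psi_0 := s_0$ and $\psi_{j+1} := \neg s_{j+1} \rightarrow \bigvee_{i\le j}\neg\psi_i$ (ordinary course-of-values recursion on syntax suffices; the arithmetized recursion theorem you invoke is unnecessary, since each entry refers only to strictly earlier entries, and the reference is backward, not forward as in Yablo's original sequence). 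Then \emph{every} $\psi_j$ is true by a purely local reductio, with no induction and no climbing of the chain: if $\neg T\psi_j$, then $j>0$ and compositionally $\neg Ts_j \wedge \neg T\bigvee_{i<j}\neg\psi_i$; peeling off the last disjunct gives $\neg T\bigvee_{i<j-1}\neg\psi_i \wedge T\psi_{j-1}$; expanding $\psi_{j-1}$ gives $Ts_{j-1} \vee T\bigvee_{i<j-1}\neg\psi_i$, and the single instance $Ts_{j-1}\rightarrow Ts_j$ of your hypothesis (contraposed, using $\neg Ts_j$) kills the first disjunct, leaving $T\bigvee_{i<j-1}\neg\psi_i$ alongside its negation. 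One unfolding step yields a contradiction: a single failure refutes itself locally, so all $\psi_j$ are true ``for free.''

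With that claim in hand, the endgame also differs from your plan: you do not need to manufacture a drop $Ts_m \wedge \neg Ts_{m+1}$ (that goal is essentially the least-dark-index approach you yourself reject as requiring a nonstandard conjunction). Instead, if $\neg Ts_j$ for some $j$, then the truth of $\psi_j$ with true antecedent $\neg s_j$ immediately yields $T\bigvee_{i<j}\neg\psi_i$---so the disjunction fed to $\DCout$ is certified true not ``in boundedly many compositional steps from $Ts_0$'' but automatically, from the assumed failure itself. $\DCout$ then returns some $i<j$ with $T\neg\psi_i$, i.e.\ $\neg T\psi_i$, contradicting the claim that every $\psi_i$ is true. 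Your diagnosis of the obstruction (no $\DCin$, no covert induction) is accurate, but the resolution is not the bounded certification chain you were seeking; it is that the self-refuting implication form makes each required truth a constant-size argument. As written, the proposal correctly identifies the tools and the difficulty but leaves the actual construction and both contradictions unsupplied, so it does not yet constitute a proof.
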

\begin{proof}
	Working in $\CT^- + \DCout$, fix any sequence $(\phi_0, \ldots, \phi_c) \in \SentSeq_{\LPA}$ such that $T\phi_0$ holds and for each $i$, $T\phi_i$ entails $T\phi_{i+1}$. Let us define a sequence $\psi_i, i \leq c$ as follows:
	\begin{eqnarray*}
		\psi_0 & : = & \phi_0 \\
		\psi_{j+1} & : = & \neg \phi_{j+1} \rightarrow \bigvee_{i < j+1} \neg \psi_i.
	\end{eqnarray*}
	
	We claim that for all $j \leq c$, the sentence $\psi_j$ is true. Suppose that $\neg T \psi_j$ holds for some $j$. Then  $j>0$, so we have:
	\begin{displaymath}
		\neg T \phi_j \wedge \neg T \bigvee_{i<j} \neg \psi_i.
	\end{displaymath}
	By compositional conditions, this is equivalent to:
	\begin{displaymath}
		\neg T \phi_j \wedge  \neg T \left( \bigvee_{i<j-1} \neg \psi_i \right) \wedge T \psi_{j-1}.
	\end{displaymath}
	Expanding the definition of $\psi_{j-1}$, we obtain:
	\begin{displaymath}
		\neg T\phi_j \wedge \neg T \left( \bigvee_{i< j-1} \neg \psi_i \right) \wedge \left (T \phi_{j-1} \vee T \left( \bigvee_{i<j-1} \neg \psi_i \right) \right).
	\end{displaymath}
	However, $\neg T \phi_j$ implies $\neg T \phi_{j-1}$. Therefore, we have the following:
	\begin{displaymath}
		\neg T\phi_j \wedge  \neg T \left( \bigvee_{i< j-1} \neg \psi_i \right)  \wedge  T \left( \bigvee_{i<j-1} \neg \psi_i \right). 
	\end{displaymath}
	This contradiction concludes the proof of the claim. Notice that the proof of the claim only uses the fact that provably in $\CT^-$, 
	\begin{displaymath}
		T \left( \bigvee_{i \leq j+1} \eta_i \right) \equiv T \left( \bigvee_{i \leq j} \eta_i\right) \vee T\eta_{j+1}.
	\end{displaymath}
	
	Now, we show that for all $j<c$, $\phi_j$ is true. Suppose otherwise and fix $j$ such that $\neg T\phi_j$. Since $T\psi_j$ holds, we have:
	\begin{displaymath}
		T \bigvee_{i< j} \neg \psi_i
	\end{displaymath}
	By $\DCout$, we can fix $i<j$ such that $\neg T \psi_i$ holds. However, this contradicts the previous claim.
\end{proof}

As an application of the above result, we show that $\DCout$ is the same theory as $\DC$. In particular, $\CT^- + \DCout$ is not conservative over $\PA$.

\begin{theorem} \label{th_seqind_implies_dcin}
	$\CT^- + \SeqInd$ implies $\DCin$. Consequently, $\DCout$ and $\DC$ are equivalent over $\CT^-$.  
\end{theorem}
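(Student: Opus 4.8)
The plan is to derive $\DCin$ from $\SeqInd$ by associating to each big disjunction the sequence of its partial disjunctions, and then to combine this with Theorem \ref{th_dcout_implies_sind} to obtain the stated equivalence.

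Working in $\CT^- + \SeqInd$, I would first fix a sequence $\bar{\phi} = (\phi_0, \ldots, \phi_c) \in \SentSeq_{\LPA}$ together with an index $k \leq c$ for which $T\phi_k$ holds. Since the operation forming partial disjunctions is arithmetically definable, $\PA$ (hence $\CT^-$) guarantees the existence of the coded sequence $\theta_j := \bigvee_{i \leq j} \phi_i$ for $j \leq c$. The grouping convention gives $\theta_{j+1} = \theta_j \vee \phi_{j+1}$, so the compositional disjunction clause of $\CT^-$ yields
\[
	T\theta_{j+1} \equiv T\theta_j \vee T\phi_{j+1}
\]
for every $j < c$. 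In particular $T\theta_j \rightarrow T\theta_{j+1}$ holds for all such $j$, so the partial disjunctions form a chain that is monotone with respect to $T$.

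Next I would note that $T\phi_k$ forces $T\theta_k$: for $k = 0$ this is immediate since $\theta_0 = \phi_0$, and for $k > 0$ it follows from $T\theta_k \equiv T\theta_{k-1} \vee T\phi_k$. I would then consider the truncated coded sequence $s = (\theta_k, \theta_{k+1}, \ldots, \theta_c)$, so that $s_m = \theta_{k+m}$. Its first term $s_0 = \theta_k$ is true, and it satisfies the implication chain $Ts_m \rightarrow Ts_{m+1}$ by the monotonicity just established; hence $\SeqInd$ applied to $s$ delivers $Ts_m$ for every term, in particular $T\theta_c = T\bigvee \bar{\phi}$. Since $k \leq c$ was arbitrary, this is exactly $\DCin$.

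The step I expect to carry the real weight is the passage from pointwise monotonicity to the truth of the final partial disjunction: the statement ``$T\theta_j$ for all $j \geq k$'' is a formula mentioning $T$, so it is precisely the kind of assertion that $\CT^-$ alone cannot establish by arithmetical induction, and it is here that $\SeqInd$ is genuinely needed. The remaining points---coding the sequence of partial disjunctions and handling the index shift in the truncation---are routine. Finally, for the equivalence I would combine this implication with Theorem \ref{th_dcout_implies_sind}: over $\CT^-$, $\DCout$ implies $\SeqInd$, which in turn implies $\DCin$, so $\CT^- + \DCout \vdash \DCin$ and hence $\CT^- + \DCout \vdash \DC$. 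Since $\DC$ trivially entails $\DCout$, the theories $\CT^- + \DCout$ and $\CT^- + \DC$ coincide.
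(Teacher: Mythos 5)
Your proof is correct and follows essentially the same route as the paper's: both pass to the coded sequence of left-grouped partial disjunctions $\theta_j = \bigvee_{i \leq j} \phi_i$, use the compositional disjunction axiom to get $T\theta_j \rightarrow T\theta_{j+1}$, and apply $\SeqInd$ starting from the true partial disjunction $\theta_k$, with the equivalence then obtained by chaining Theorem \ref{th_dcout_implies_sind} with this implication exactly as in the paper. Your explicit treatment of the truncation and of why $\SeqInd$ (rather than arithmetical induction) is needed is a faithful, slightly more detailed rendering of the paper's argument.
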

\begin{proof}
	Working in $\CT^- + \SeqInd$ fix any sequence $(\alpha_0, \ldots, \alpha_c)$  such that $\alpha_i \in \Sent_{\LPA}$ for all $i \leq c$. Suppose that $\alpha_j$ is true for some $j \leq c$ and notice that 
	\begin{displaymath}
		\bigvee_{i\leq j} \alpha_i = \bigvee_{i \leq j-1} \alpha_i \vee \alpha_j ,
	\end{displaymath}
	hence the disjunction of $\alpha_i$ up to and including $j$ has to be true as well. Moreover, for any $k$, if $\bigvee_{i \leq k} \alpha_i$ is true, then $\bigvee_{i \leq k+1} \alpha_i$ is true. Hence by sequential induction (starting with $\bigvee_{i \leq j} \alpha_i$), $T\bigvee_{i \leq c} \alpha_i$ holds. 
\end{proof}

Another application is a more perspicuous proof of nonconservativity of $\CT^- + \DC$. 
\begin{theorem} \label{th_dc_implies_seqoind}
	$\CT^- + \DC$ implies $\SeqOInd$. Hence, $\CT^- + \DC$ is equivalent to $\CT_0$. 	
\end{theorem}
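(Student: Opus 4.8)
The plan is to work in $\CT^- + \DC$ and to reduce $\SeqOInd$ to the sequential induction principle $\SeqInd$, which is already at our disposal: since $\DC$ contains $\DCout$, Theorem \ref{th_dcout_implies_sind} hands us $\SeqInd$ for free. The bridge between the two induction principles will be a form of \emph{conjunctive correctness}, namely the statement that $T\left(\bigwedge \bar\phi\right) \to \forall i \leq \lh(\bar\phi)\, T\phi_i$ for every coded sequence $\bar\phi \in \SentSeq_{\LPA}$. The entire argument hinges on deriving this conjunctive principle from $\SeqInd$ alone; once it is available, the passage to $\SeqOInd$ is essentially bookkeeping.

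First I would establish conjunctive correctness. Given $\bar\phi = (\phi_0, \ldots, \phi_c)$, write $\Phi_j := \bigwedge_{i \leq j} \phi_i$ for the partial conjunctions, grouped to the left exactly as the big disjunctions were, so that $\Phi_{j+1} = \Phi_j \wedge \phi_{j+1}$. The key observation is that the compositional clause for conjunction already yields, provably in $\CT^-$, the \emph{one-step} implication $T\Phi_{j+1} \to T\Phi_j$, because $T\Phi_{j+1} \equiv T\Phi_j \wedge T\phi_{j+1}$. Hence the reversed sequence $(\Phi_c, \Phi_{c-1}, \ldots, \Phi_0)$ has precisely the shape required by $\SeqInd$: its first term $\Phi_c$ is assumed true and each term entails its successor. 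Applying $\SeqInd$ to this reversed sequence propagates the truth of $\Phi_c$ down to every $\Phi_j$, and one further application of the binary conjunction clause peels $T\phi_j$ off $T\Phi_j$. This gives $\forall j \leq c\, T\phi_j$, which is conjunctive correctness.

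With conjunctive correctness in hand, I would prove $\SeqOInd$ as follows. Fix a sequence $(\phi_0, \ldots, \phi_c)$ satisfying the course-of-values hypothesis $\forall j \leq c\,\big((\forall i < j\, T\phi_i) \to T\phi_j\big)$, and again form the partial conjunctions $\Phi_j$. I claim that $(\Phi_0, \ldots, \Phi_c)$ meets the hypotheses of $\SeqInd$. For the base case, $\Phi_0 = \phi_0$ is true because the premise $\forall i < 0\, T\phi_i$ is vacuous. For the inductive step, assume $T\Phi_j$; conjunctive correctness yields $\forall i \leq j\, T\phi_i$, that is $\forall i < j+1\, T\phi_i$, so the course-of-values hypothesis gives $T\phi_{j+1}$, and then $T\Phi_{j+1} \equiv T\Phi_j \wedge T\phi_{j+1}$ gives $T\Phi_{j+1}$. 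Thus $\SeqInd$ yields $T\Phi_c$, and one last appeal to conjunctive correctness produces $\forall l \leq c\, T\phi_l$, establishing the instance of $\SeqOInd$.

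Finally, the ``hence'' clause follows by combining $\SeqOInd$ with Proposition \ref{prop_seqoind_equiv_ct0}, which already identifies $\CT^- + \SeqOInd$ with $\CT_0$; the reverse inclusion $\CT_0 \vdash \DC$ is routine, since the disjunctive biconditional is a $\Delta_0$ property of the length of the disjunction and so succumbs to $\Delta_0$-induction. The main obstacle is the conjunctive-correctness step. A naive attempt to prove $\SeqOInd$ directly by ``infinite descent'' through the course-of-values hypothesis silently invokes the least-number principle, which is unavailable in $\CT^-$. The device that circumvents this is the realization that conjunctive correctness --- the genuinely induction-like ingredient --- can be extracted from $\SeqInd$ precisely because deleting the last conjunct is a \emph{single} compositional step, so the required descent is carried by one-step sequential induction rather than by an unavailable minimization.
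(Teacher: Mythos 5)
Your proof is correct, but at the crucial step it takes a genuinely different route from the paper. The paper, after extracting $\SeqInd$ from $\DCout$ exactly as you do, keeps using $\DC$ itself as the bridge: it applies $\SeqInd$ to the sequence of sentences $\neg \bigvee_{i \leq j} \neg \phi_i$, and both directions of $\DC$ are invoked to translate $T \neg \bigvee_{i \leq j} \neg \phi_i$ back and forth into ``$T\phi_i$ for all $i \leq j$,'' which is what makes that sequence progressive under the course-of-values hypothesis. You instead never touch $\DC$ again after obtaining $\SeqInd$: your reversed-sequence trick --- applying $\SeqInd$ to $(\Phi_c, \ldots, \Phi_0)$, where the one-step implications $T\Phi_{j+1} \rightarrow T\Phi_j$ come from the binary conjunction axiom of $\CT^-$ --- derives conjunctive correctness, and hence $\SeqOInd$, from $\CT^- + \SeqInd$ alone. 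This is a real gain in information: it shows directly that $\SeqInd$ and $\SeqOInd$ are equivalent over $\CT^-$, a fact the paper only obtains indirectly via the outer-disjunction machinery of Theorem \ref{th_sind_has_outer_disjunction} and Proposition \ref{prop_outer_disjunctions} (and note your derivation is consistent with Theorem \ref{hierarchy_sind_soind_delta0}, since it essentially uses the compositional conjunction clause, unavailable over bare $\PA$). Incidentally, your argument is a pleasing dual to the paper's proof of Theorem \ref{th_sind_has_outer_disjunction}, which runs $\SeqInd$ along the \emph{forward} sequence of partial conjunctions to get the ``in'' direction (all conjuncts true implies conjunction true); your reversal yields the ``out'' direction. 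All the formalisation details check out: the coded sequence of left-grouped partial conjunctions and its reversal exist by primitive recursion in $\PA$, and every progressivity condition you feed to $\SeqInd$ holds for all indices simultaneously because it comes from universally quantified axioms or hypotheses, with no hidden induction. Your finishing moves (Proposition \ref{prop_seqoind_equiv_ct0}, and $\CT_0 \vdash \DC$ by $\Delta_0(T)$-induction on the length of the disjunction with the coded sequence of partial disjunctions as a parameter) coincide with the paper's.
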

\begin{proof}
	Working in $\CT^- + \DC$, fix any sequence $(\phi_0, \ldots, \phi_c)$ and suppose that for any $j$, if $T\phi_i$ holds for all $i< j$, then $T\phi_j$ holds. By $\DC$, the following implication holds for all $j \leq c$:
	\begin{displaymath}
		T \neg \bigvee_{i \leq j} \neg \phi_i \rightarrow T \neg \bigvee_{i \leq j+1} \neg \phi_i.
	\end{displaymath}
	(Of course, these are essentially big conjunctions, but \textit{prima facie}, disjunctive correctness does not imply conjunctive correctness.) Then, by $\SeqInd$, we conclude that 
	\begin{displaymath}
		T \neg \bigvee_{i \leq j} \neg \phi_i
	\end{displaymath}
	holds for each $i$ which, again using $\DC$ implies that $T\phi_j$ holds for each $j$. 
	
	By Proposition \ref{prop_seqoind_equiv_ct0} it follows that $\CT_0 \subseteq \CT^- + \DC$. Since $\DC$ is easily provable in $\CT_0$, we conclude that $\CT^- + \DC$ is equivalent to $\CT_0$.
\end{proof}

\begin{remark} \label{rem_dc_via_int}
	The main nonconservativeness proof for $\DC$ in \citep{EnayatPakhomov} consists of two parts: it is first shown that over $\CT^-$, $\DC$ implies the axiom of internal induction $\INT$, to be defined in the next section, and then a much more direct proof that $\DC + \INT$ implies $\CT_0$ follows. It is easy to verify that $\CT^- + \SeqInd$ entails internal induction and since we know that $\CT^- + \DC$ implies $\SeqInd$, we obtain  a still different proof that $\CT^- + \DC$ is equivalent to $\CT_0$.
\end{remark}

We can present the above results in a slightly more abstract manner. This will allow us to obtain a significantly simpler proof of the result from \citep{wcislo_prop_plus_qf} on the strength of certain extensions of Propositional Soundness.

\begin{definition} \label{defi_outer_disjunction}
	Let $U$ be a theory extending $\CT^-$. We say that $U$ has \df{outer disjunctions} if there exists a provably functional formula $D(x,y)$ such that for any $\bar{\phi} = \tuple{\phi_1,\ldots,\phi_c} \in \SentSeq_{\LPA}$, we have $D(\bar{\phi}) \in \Sent_{\LPA}$\ and the following two properties hold provably in $U$:
	\begin{itemize}
		\item $\forall \bar{\phi} \in \SentSeq_{\LPA} \forall \psi \in \Sent_{\LPA} \ \Big(T D(\bar{\phi}\frown\tuple{\psi}) \equiv TD(\bar{\phi}) \vee T\psi \Big).$
		\item $\forall \bar{\phi} \in \SentSeq_{\LPA} \Big(TD(\bar{\phi}) \rightarrow \exists i \leq \lh(\bar{\phi}) \ T\phi_i \Big).$
	\end{itemize}	
	We call $D$ as above an outer disjunction of $\phi_1, \ldots, \phi_c$. 
\end{definition}
In other words, a theory of truth has outer disjunctions if it has some uniform construction that behaves like disjunctions in $\CT^- + \DCout$. 
\begin{proposition} \label{prop_outer_disjunctions}
	Suppose that $U$ has outer disjunctions. Then it satisfies $\SeqOInd$ and in particular, it contains $\CT_0$. 
\end{proposition}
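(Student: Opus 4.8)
The plan is to show that the abstract notion of outer disjunctions suffices to replay the concrete argument of Theorem \ref{th_dcout_implies_sind} and then conclude via Proposition \ref{prop_seqoind_equiv_ct0}. The key observation is that the proof of Theorem \ref{th_dcout_implies_sind} used essentially only two features of the operation $\bigvee$: the recursive compositional clause relating a disjunction over $j+1$ disjuncts to the disjunction over $j$ disjuncts together with the last disjunct, and the ``DCout'' property that a true disjunction has a true disjunct. These are precisely the two bullets in Definition \ref{defi_outer_disjunction}. So if $U$ has outer disjunctions witnessed by $D$, I would literally rerun that construction with $D$ in place of $\bigvee$ to obtain $\SeqInd$, and then combine $\SeqInd$ with $\SeqOInd$. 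However, the cleaner route is to derive $\SeqOInd$ directly, which is what the statement demands.

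First I would work in $U$ and, given a sequence $(\phi_0,\ldots,\phi_c) \in \SentSeq_{\LPA}$ satisfying the hypothesis of $\SeqOInd$ — namely that for every $j$, if $T\phi_i$ for all $i<j$ then $T\phi_j$ — I would define an auxiliary Yablo-style sequence $\psi_i$ exactly as in Theorem \ref{th_dcout_implies_sind}, but using $D$ in the role of $\bigvee$:
\begin{eqnarray*}
	\psi_0 & := & \phi_0 \\
	\psi_{j+1} & := & \neg \phi_{j+1} \rightarrow D(\tuple{\neg\psi_0,\ldots,\neg\psi_j}).
\end{eqnarray*}
Here I must be slightly careful: in Theorem \ref{th_dcout_implies_sind} the hypothesis was the $\SeqInd$ hypothesis (a chain $T\phi_i \rightarrow T\phi_{i+1}$ with $T\phi_0$), whereas $\SeqOInd$ gives the stronger ``order'' hypothesis. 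The simplest strategy is therefore to first prove $U \vdash \SeqInd$ by the verbatim abstract analogue of Theorem \ref{th_dcout_implies_sind}, and then invoke the already-established route $\SeqInd \Rightarrow \PropRef \Rightarrow \CT_0 \Rightarrow \SeqOInd$; alternatively, since $\SeqOInd$ trivially implies $\SeqInd$ and the reverse passes through $\CT_0$, I would simply establish $\SeqInd$ abstractly and then note that Proposition \ref{prop_seqoind_equiv_ct0} together with the equivalences in Theorem \ref{th_many_faces} closes the loop, because $\SeqInd$ yields $\PropRef$ (each step of a propositional proof chained by modus ponens) and $\PropRef$ is equivalent to $\CT_0$, which contains $\SeqOInd$.

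Concretely, to get $\SeqInd$ I would prove the claim that every $\psi_j$ is true, reproducing the displayed chain of equivalences from Theorem \ref{th_dcout_implies_sind} with the first bullet of Definition \ref{defi_outer_disjunction} justifying each passage from $D$ over $j$ disjuncts to $D$ over $j-1$ disjuncts together with $\neg\psi_{j-1}$. Assuming $\neg T\psi_j$ for the least such $j$ forces $\neg T\phi_j$ and $\neg TD(\tuple{\neg\psi_0,\ldots,\neg\psi_{j-1}})$; expanding $\psi_{j-1}$ and using $\neg T\phi_j \rightarrow \neg T\phi_{j-1}$ (valid because the sequence is a modus-ponens chain) produces the contradiction $\neg TD(\cdots) \wedge TD(\cdots)$. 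Then, exactly as before, if some $\phi_j$ were false, the truth of $\psi_j$ together with $\neg T\phi_j$ gives $TD(\tuple{\neg\psi_0,\ldots,\neg\psi_{j-1}})$, and the second bullet of Definition \ref{defi_outer_disjunction} yields some $i<j$ with $\neg T\psi_i$, contradicting the claim.

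The main obstacle, and the only place requiring genuine care, is matching the hypothesis of the induction principle with the ingredients available. The argument for the claim ``$\psi_j$ is true'' needs $\neg T\phi_j \rightarrow \neg T\phi_{j-1}$, which is precisely the $\SeqInd$-style chain hypothesis; the $\SeqOInd$ hypothesis does not directly give this implication. I therefore expect the honest proof to establish $\SeqInd$ first, purely from the two bullets of Definition \ref{defi_outer_disjunction} together with $\CT^-$, and then obtain $\SeqOInd$ (and hence containment of $\CT_0$) for free through the chain $\SeqInd \Rightarrow \PropRef \Leftrightarrow \CT_0 \supseteq \SeqOInd$ supplied by Proposition \ref{prop_seqoind_equiv_ct0} and Theorem \ref{th_many_faces}. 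The remaining verifications — that $D$ is well-defined on the sequences used and that the compositional manipulations are licensed by $\CT^-$ alone — are routine and mirror the concrete case.
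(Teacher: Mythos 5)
Your first half is correct and matches the paper: the Yablo-style argument of Theorem \ref{th_dcout_implies_sind} uses only the two bullets of Definition \ref{defi_outer_disjunction}, so $U \vdash \SeqInd$ follows verbatim with $D$ in place of $\bigvee$, and you are right that the $\SeqOInd$ hypothesis cannot feed that argument directly. The gap is in your closing bridge: the claim that ``$\SeqInd$ yields $\PropRef$ (each step of a propositional proof chained by modus ponens)'' is false as stated. In a propositional proof $(\phi_0,\ldots,\phi_c)$, the line $\phi_{i+1}$ is obtained by modus ponens from two lines occurring \emph{arbitrarily earlier}, so $T\phi_i \rightarrow T\phi_{i+1}$ need not hold; the proof structure only supplies the order-hypothesis ``if all $\phi_j$ with $j<i$ are true, then $\phi_i$ is true,'' which is exactly why Proposition \ref{prop_seqoind_equiv_ct0} derives $\PropRef$ from $\SeqOInd$ and not from $\SeqInd$. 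The obvious patch of accumulating conjunctions $\bigwedge_{j \leq i} \phi_j$ also fails over $\CT^-$, since extracting a single conjunct from a conjunction of nonstandard length requires unwinding nonstandardly many compositional steps, i.e., the very induction one lacks. Nor can you silently pass from $\SeqInd$ to $\SeqOInd$: as bare induction principles they are inequivalent (Theorem \ref{hierarchy_sind_soind_delta0} separates them), and within the paper the implication from $\CT^- + \SeqInd$ to $\CT_0$ is itself obtained through Theorem \ref{th_sind_has_outer_disjunction} together with the very proposition you are proving, so invoking it here would be circular.

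The repair is the paper's actual route, which replays Theorems \ref{th_seqind_implies_dcin} and \ref{th_dc_implies_seqoind} abstractly after the $\SeqInd$ step. Having $\SeqInd$, the first bullet of Definition \ref{defi_outer_disjunction} lets you run the argument of Theorem \ref{th_seqind_implies_dcin} for $D$: if $T\phi_j$ holds, then $TD(\tuple{\phi_0,\ldots,\phi_j})$, and $TD(\tuple{\phi_0,\ldots,\phi_k}) \rightarrow TD(\tuple{\phi_0,\ldots,\phi_{k+1}})$ for every $k$, so $\SeqInd$ applied to the sequence of sentences $D(\tuple{\phi_0,\ldots,\phi_k})$ yields the $\DCin$ direction for $D$; together with the second bullet, $D$ then provably satisfies full $\DC$. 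Now run Theorem \ref{th_dc_implies_seqoind} for $D$: under the $\SeqOInd$ hypothesis for $(\phi_0,\ldots,\phi_c)$, $\DC$ for $D$ gives $T\neg D(\tuple{\neg\phi_0,\ldots,\neg\phi_j}) \rightarrow T\neg D(\tuple{\neg\phi_0,\ldots,\neg\phi_{j+1}})$ for all $j$, $\SeqInd$ propagates these sentences, and $\DC$ for $D$ converts the conclusion back into $T\phi_j$ for all $j$. This delivers $\SeqOInd$, hence $\CT_0$ by Proposition \ref{prop_seqoind_equiv_ct0}, and is precisely what the paper's one-line proof means by ``literally the same argument as in Theorems \ref{th_dcout_implies_sind}, \ref{th_seqind_implies_dcin}, and \ref{th_dc_implies_seqoind}.''
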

\begin{proof}[Sketch of the proof.]
	This is literally the same argument as in Theorems \ref{th_dcout_implies_sind}, \ref{th_seqind_implies_dcin}, and \ref{th_dc_implies_seqoind}. We only used the fact that $\CT^- + \DCout$ has outer disjunctions. 
\end{proof}
By \df{quantifier-free correctness}, $\QFC$, we mean the following axiom:
\begin{displaymath}
	\forall \phi \in \qfSent_{\LPA} \Big(\Tr_0(\phi) \rightarrow T\phi\Big).
\end{displaymath}
Let us  pause for a moment and explain the notation. The formula $\qfSent_{\LPA}(x)$ expresses that $x$ is a quantifier-free sentence of $\LPA$, i.e., a Boolean combination of closed term equations. Peano arithmetic has a canonical way of deciding whether such (possibly nonstandard) sentences should be true or false by applying partial arithmetical truth predicates. The formula $\Tr_0$ is such a predicate for $\Delta_0$ formulae.\footnote{For a more detailed explanation of what partial arithmetical truth predicates are, consult \citep{kaye}, Section 9 or \citep{hajekpudlak}, Chapter I, Section 1(d).} It can be checked that $\QFC$ does not bring any arithmetical strength to $\CT^-$. 
\begin{proposition} \label{prop_qfc_conservative}
	$\CT^- + \QFC$ is conservative over $\PA$.
\end{proposition}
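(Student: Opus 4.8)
The plan is to show that $\CT^- + \QFC$ is conservative over $\PA$ by a model-theoretic argument: given any countable recursively saturated $M \models \PA$, I want to construct a truth class $T$ on $M$ satisfying the compositional axioms together with $\QFC$, without introducing any new arithmetical consequences. By the standard correspondence (the same one underlying the Kotlarski--Krajewski--Lachlan theorem, Theorem \ref{tw_kkl}), it suffices to equip every countable recursively saturated model of $\PA$ with such a class, so I would fix such an $M$ and work toward building $T \subseteq M$.

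The key observation is that $\QFC$ is a very mild demand: it only constrains $T$ on the quantifier-free sentences, where it forces $T$ to agree with the arithmetically definable predicate $\Tr_0$. So the construction factors into two essentially independent tasks. First, I would stipulate outright that for every $\phi \in \qfSent_{\LPA}(M)$, we put $\phi \in T$ if and only if $\Tr_0(\phi)$ holds in $M$; because $\Tr_0$ is compositional for Boolean connectives and correctly evaluates closed term equations, this choice already satisfies the negation, disjunction, and conjunction clauses of $\CT^-$ \emph{restricted to quantifier-free sentences}, as well as regularity on that fragment. Second, I would extend this partial valuation to a full compositional truth class on all of $\Sent_{\LPA}(M)$. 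The natural tool here is the Enayat--Visser method of building satisfaction classes by a back-and-forth/compactness argument over an elementary extension, or equivalently the approximation-by-finite-partial-satisfaction-classes technique: one shows that the partial specification ``$T$ extends $\Tr_0$ on quantifier-free sentences and is compositional'' is finitely satisfiable in the appropriate sense and hence realizable in a recursively saturated model. The crucial compatibility point is that $\Tr_0$ is itself a genuine compositional arithmetical truth predicate on the quantifier-free sentences, so prescribing $T$ to equal it there imposes no obstruction to compositional extension: any compositional truth class automatically evaluates standard quantifier-free sentences via $\Tr_0$, and the content of $\QFC$ is only to enforce this on nonstandard ones too.

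The main obstacle, and the step I would spend the most care on, is verifying that forcing $T$ to coincide with $\Tr_0$ on \emph{nonstandard} quantifier-free sentences does not clash with the compositional clauses once we climb to sentences with quantifiers. Concretely, when a quantifier-free sentence $\phi$ is itself a disjunction $\alpha \vee \beta$, the $\CT^-$ disjunction clause demands $\phi \in T \equiv (\alpha \in T \vee \beta \in T)$, while $\QFC$ demands $\phi \in T \equiv \Tr_0(\phi)$; these must be shown consistent, which reduces to the provable-in-$\PA$ fact that $\Tr_0$ respects the Boolean connectives and correctly computes values of closed terms (so $\Tr_0(\alpha \vee \beta) \equiv \Tr_0(\alpha) \vee \Tr_0(\beta)$, and likewise for negation and conjunction). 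Once this internal coherence of $\Tr_0$ is in hand, the prescription on quantifier-free sentences is simply a coherent \emph{initial condition} for the compositional construction, and the existence of a compositional extension follows from the conservativity machinery already invoked in Theorem \ref{tw_kkl}. I would therefore present the argument as: (i) recall that $\Tr_0$ is provably compositional in $\PA$; (ii) note that any compositional $T$ agreeing with $\Tr_0$ on atomic sentences agrees with it on all quantifier-free sentences, so $\QFC$ is equivalent to demanding that agreement on the nonstandard fragment; (iii) invoke the Enayat--Visser or KKL-style construction to produce a compositional truth class on a countable recursively saturated model whose restriction to quantifier-free sentences is $\Tr_0$, which is possible precisely because $\Tr_0$ satisfies the compositional constraints there; and (iv) conclude conservativity over $\PA$.
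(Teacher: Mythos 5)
Your proposal is correct and takes essentially the same route as the paper, which proves this proposition precisely by a routine application of the Enayat--Visser method (citing \citep{wcislo_prop_plus_qf} for the details in the setting with $\REG$): the key point you identify --- that the provable-in-$\PA$ compositionality of $\Tr_0$ on quantifier-free sentences makes ``agree with $\Tr_0$ on the quantifier-free fragment'' a coherent base case for the recursive construction of the satisfaction class --- is exactly the modification of the base step $S_0$ in Lemma \ref{lem_induction_step_in_ev_chain} that the paper itself describes in Remark \ref{rem_sigma_n_correctness}. One wording slip to fix: in your summary step (ii), compositionality plus atomic agreement forces agreement with $\Tr_0$ only on \emph{standard} quantifier-free sentences (as you correctly state earlier in the proposal), since if it held for all of them, $\QFC$ would already be a theorem of $\CT^-$.
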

The above proposition follows by a routine application of the methods introduced by Enayat and Visser, see e.g. \citep{enayatvisser2}. A proof of this result in the exact same setting in which we work (with regularity axioms included in the definition of $\CT^-$) can be found in \citep{wcislo_prop_plus_qf}. In the same article, it was shown with a different argument using a propositional construction called disjunctions with stopping conditions that $\QFC$ becomes significantly stronger when combined with $\PropSnd$. Now we can prove that result with a simpler argument:

\begin{proposition} \label{prop_qfc_plus_prop_has_outer_dijunctions}
	$\CT^- + \QFC + \PropSnd$ has outer disjunctions. In particular, it is equivalent to $\CT_0$. 
\end{proposition}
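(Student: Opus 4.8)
The plan is to produce an explicit operation $D$ witnessing that $\CT^- + \QFC + \PropSnd$ has outer disjunctions in the sense of Definition~\ref{defi_outer_disjunction}; the conclusion that this theory contains $\CT_0$ is then immediate from Proposition~\ref{prop_outer_disjunctions}. For $D$ I would take the \emph{disjunctions with stopping conditions} of \citep{wcislo_prop_plus_qf}. Concretely, given $\bar\phi = \tuple{\phi_0,\ldots,\phi_c}$, one uses the diagonal lemma to define, uniformly in the code of $\bar\phi$, a sequence of sentences $\chi_0,\ldots,\chi_c$ in which each $\chi_i$ is $\phi_i$ guarded by a quantifier-free ``stopping condition'' asserting that the search through the earlier disjuncts has not yet halted, and sets $D(\bar\phi) := \bigvee_{i\le c}\chi_i$. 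The point of keeping the guards quantifier-free is that $\QFC$ forces their $T$-value to coincide with the arithmetical value computed by $\Tr_0$, so that inside $T$ they behave as genuine, decidable markers rather than as opaque sentences.

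First I would check the recursive clause $T D(\bar\phi\frown\tuple{\psi}) \equiv TD(\bar\phi)\vee T\psi$. Appending $\psi$ changes the sentence only by adjoining one more guarded disjunct and, since the guards depend on the length, by a controlled adjustment of the earlier guards; using the compositional clauses of $\CT^-$ for $\vee$ and $\wedge$ together with $\QFC$ applied to the quantifier-free guards, the $T$-value of the new disjunction computes to the $T$-value of the old one disjoined with $T\psi$. This part is essentially bookkeeping, modulo the care needed to see that the length-dependent guards do not disturb the truth value of the initial segment.

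The main obstacle is the second clause, $TD(\bar\phi) \rightarrow \exists i \le \lh(\bar\phi)\, T\phi_i$, i.e.\ the genuinely ``outer'' direction, which in plain $\CT^-$ can fail. Here the stopping conditions, together with $\PropSnd$, do the work, in the spirit of the Yablo argument of this section: assuming $TD(\bar\phi)$ but $\neg T\phi_i$ for every $i \le c$, the guards describe a halting search that never halts, and one can write down a fixed propositional tautology relating $D(\bar\phi)$, the guards, and the $\phi_i$ whose truth, granted by $\PropSnd$ and unwound by the compositional clauses, contradicts $TD(\bar\phi)$. Making the guards simultaneously transparent enough for the recursive clause and rigid enough to force a true disjunct is the delicate design point, and is exactly where $\QFC$ and $\PropSnd$ must be combined; I expect this to be where essentially all the content lies.

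Once $D$ is verified to be an outer disjunction, Proposition~\ref{prop_outer_disjunctions} yields $\SeqOInd$ and hence $\CT_0 \subseteq \CT^- + \QFC + \PropSnd$. For the reverse inclusion it suffices to note that $\CT_0$ proves both $\QFC$ — by $\Delta_0$-induction on the build-up of quantifier-free sentences, establishing $T\phi \equiv \Tr_0(\phi)$ — and $\PropSnd$, which is a special case of $\PropRef$ and hence available by Theorem~\ref{th_many_faces}. This gives the claimed equivalence with $\CT_0$.
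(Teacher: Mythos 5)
Your overall architecture (exhibit an outer disjunction $D$, invoke Proposition~\ref{prop_outer_disjunctions}, and get the reverse inclusion by noting $\CT_0 \vdash \QFC$ and $\CT_0 \vdash \PropSnd$) matches the paper, and the reverse-inclusion paragraph is fine. But the core of your proposal has a genuine gap: you never actually define $D$. You appeal to the ``disjunctions with stopping conditions'' of the earlier preprint, say the guards are built ``using the diagonal lemma,'' and then explicitly defer both verifications — the recursive clause is dismissed as ``bookkeeping modulo the care needed,'' and for the $\DCout$-style clause you write that designing guards simultaneously transparent and rigid enough ``is the delicate design point'' where ``essentially all the content lies.'' That is precisely the content a proof must supply: without a concrete definition of the guards one cannot check either that appending a disjunct (which perturbs every earlier length-dependent guard) preserves the $T$-value of the initial segment, or that a true $D(\bar\phi)$ yields a true $\phi_i$. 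As it stands, the argument reduces to a citation of the very result being reproved.

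It is also worth knowing that the paper deliberately avoids this machinery — it presents the proposition as a \emph{simpler} replacement for the stopping-conditions argument. The paper's witness is the plain, non-self-referential formula
\begin{displaymath}
	D(\bar{\phi}) = \exists x \leq c \ \bigvee_{i \leq c} \num{i} = x \wedge \phi_i,
\end{displaymath}
so no diagonal lemma is needed: the ``guards'' are just the numeral equations $\num{i} = \num{a}$, which are quantifier-free and hence decided correctly by $\QFC$, while $\PropSnd$ certifies the two explicit tautologies used, namely $\left( \bigwedge_{i \leq c} \num{i} \neq \num{c+1} \right) \rightarrow \neg \left( \bigvee_{i \leq c} \num{i} = \num{c+1} \wedge \phi_i \right)$ for the recursive clause and $\neg \phi_a \rightarrow \neg \bigvee_{i \leq c} \left( \num{i} = \num{a} \wedge \phi_i \right)$ for the outer clause (after compositionality extracts a witness $a \leq c$ from the bounded existential quantifier). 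If you replace your unspecified guarded construction by this $D$ and carry out those two checks, your outline closes into a complete proof.
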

\begin{proof}
	The ``in particular'' part follows by Proposition \ref{prop_outer_disjunctions} and the fact that $\QFC$ and $\PropSnd$ are clearly implied by $\Delta_0$-induction. So it is enough to show  that $\CT^- + \QFC + \PropSnd$ has outer disjunctions. 
	
	For $\bar{\phi} = (\phi_1, \ldots, \phi_c)$, let 
	\begin{displaymath}
		D(\bar{\phi}) = \exists x \leq c \bigvee_{i \leq c} \num{i} = x \wedge \phi_i.
	\end{displaymath}
	Let us check that $D$ is an outer disjunction. First, consider the formula $D(\bar{\phi}\frown \tuple{\phi_{c+1}})$, i.e.
	\begin{displaymath}
		\exists x \leq c+1 \bigvee_{i \leq c+1} \num{i} = x \wedge \phi_i.
	\end{displaymath}
	We want to check that it is true iff either $D(\bar{\phi})$ is true or $\phi_{c+1}$ is true. Suppose that 
	\begin{displaymath}
		T\exists x \leq c+1 \ \bigvee_{i \leq c+1} \num{i} = x\wedge \phi_i.
	\end{displaymath}
	By the compositional clauses and the definition of big disjunctions, this implies:
	\begin{displaymath}
		\exists x \leq c+1 \  T \left( \bigvee_{i \leq c} \num{i} = \num{x} \wedge \phi_i \right) \vee \exists x \leq c+1 \ T\left( \num{c+1} = \num{x} \wedge \phi_{c+1}\right).
	\end{displaymath}
	It can be easily checked by compositional clauses that if the second clause holds, then $T \phi_{c+1}$ holds. So it is enough to check that the first clause in fact implies 
	\begin{displaymath}
		\exists x \leq c \  T \left( \bigvee_{i \leq c} \num{i} = \num{x} \wedge \phi_i \right).
	\end{displaymath} 
	To this end, we have to check that 
	\begin{displaymath}
		\neg T\left( \bigvee_{i \leq c} \num{i} = \num{c+1} \wedge \phi_i \right).
	\end{displaymath}
	However, notice that the following implication is an instance of a propositional tautology:
	\begin{displaymath}
		\left( \bigwedge_{i \leq c}  \num{i} \neq \num{c+1} \right) \rightarrow \neg \left( \bigvee_{i \leq c} \num{i} = \num{c+1} \wedge \phi_i \right).
	\end{displaymath}
	Hence, by $\PropRef$, the whole implication is true and by $\QFC$ the antecedent is true as well and the conclusion follows by compositionality. This ends the proof of the implication.
	
	The verification that $TD(\bar{\phi}) \vee T\phi_{c+1}$ implies $TD(\bar{\phi} \frown \tuple{\phi_{c+1}})$ is similar, but simpler, as it only uses compositional clauses of $\CT^-$. 
	
	Now, suppose that the following holds:
	\begin{displaymath}
		T\exists x \leq c \bigvee_{i \leq c} \num{i} = x \wedge \phi_i.
	\end{displaymath}
	We want to check that for some $i \leq c$, $T\phi_i$ holds. By compositional clauses, we know that there exists $a \leq c$ such that:
	\begin{displaymath}
		T \bigvee_{i \leq c} \num{i} = \num{a} \wedge \phi_i.
	\end{displaymath}
	Again, using $\QFC$ and $\PropSnd$, we check that the following holds:
	\begin{displaymath}
		T \left( \neg \phi_a \rightarrow \neg \bigvee_{i \leq c} \num{i} = \num{a} \wedge \phi_i \right).
	\end{displaymath}
	Thus we can conclude that $T\phi_a$ holds which ends the proof. 
\end{proof}
\begin{remark}
	In the above proof, we could actually show that the formula $D(\bar{\phi})$ satisfies both directions of $\DC$. 
\end{remark}
The principle $\SeqInd$ clearly looks very related to $\Delta_0$-induction for the truth predicate. Indeed, it turns out that over $\CT^-$ the two principles are equivalent.

\begin{theorem} \label{th_sind_has_outer_disjunction}
	$\CT^- + \SeqInd$ has outer disjunctions. 
\end{theorem}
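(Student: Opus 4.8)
The plan is to take the witness $D$ to be the ordinary big disjunction itself: set $D(\bar{\phi}) := \bigvee \bar{\phi}$. This is a provably functional syntactic operation sending a sequence of sentences to a sentence, so it is a legitimate candidate for Definition \ref{defi_outer_disjunction}. With this choice the first clause of that definition is immediate and needs nothing beyond $\CT^-$: by the left-grouping convention $\bigvee(\bar{\phi} \frown \tuple{\psi}) = \bigvee \bar{\phi} \vee \psi$, so the compositional disjunction axiom gives $T(\bigvee \bar{\phi} \vee \psi) \equiv T\bigvee\bar{\phi} \vee T\psi$. Hence the entire content of the theorem collapses onto the second clause, which for this $D$ is literally $\DCout$. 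So it suffices to prove that $\CT^- + \SeqInd \vdash \DCout$, and the whole point of the argument is that $\SeqInd$ is exactly the tool needed to propagate falsity through the partial disjunctions.

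For this I would argue by contradiction. Working in $\CT^- + \SeqInd$, fix $\bar{\phi} = \tuple{\phi_0, \ldots, \phi_c}$ with $T\bigvee\bar{\phi}$ and suppose toward a contradiction that $\neg T\phi_i$ for every $i \leq c$. For $k \leq c$ put $d_k := \bigvee_{i \leq k}\phi_i$; since this is a primitive recursive function of the code of $\bar{\phi}$, the sequence $s := \tuple{\neg d_0, \ldots, \neg d_c}$ is coded and consists of sentences. I then claim $s$ satisfies the hypotheses of $\SeqInd$. Indeed $s_0 = \neg\phi_0$ and $T\neg\phi_0 \equiv \neg T\phi_0$ holds by assumption; and for each $k < c$ the compositional clause gives $Td_{k+1} \equiv Td_k \vee T\phi_{k+1}$, which under $\neg T\phi_{k+1}$ collapses to $Td_{k+1} \equiv Td_k$, so that $Ts_{k+1} \equiv T\neg d_{k+1} \equiv T\neg d_k \equiv Ts_k$ and in particular $Ts_k \rightarrow Ts_{k+1}$. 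Applying $\SeqInd$ yields $Ts_j$ for all $j < \lh(s) = c+1$; taking $j = c$ gives $T\neg d_c \equiv \neg T\bigvee\bar{\phi}$, contradicting $T\bigvee\bar{\phi}$. Thus some disjunct is true, which is precisely the second clause of Definition \ref{defi_outer_disjunction}.

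The conceptual heart of the argument, and the only step I regard as nonroutine, is the observation that once every disjunct is assumed false the append equivalence of $\CT^-$ forces $Td_{k+1}$ and $Td_k$ to coincide, so that the negated partial disjunctions form exactly the one-step truth chain to which $\SeqInd$ speaks. I expect the remaining work to be bookkeeping: verifying that $s$ is genuinely a coded sequence, matching the index ranges of $\SeqInd$ (its step hypothesis over $i < \lh(s)-1$ and its conclusion over $j < \lh(s)$), and disposing of the degenerate cases of very short or empty sequences, none of which should present a real difficulty. Finally, combining this with Proposition \ref{prop_outer_disjunctions} immediately gives $\CT_0 \subseteq \CT^- + \SeqInd$; since the reverse inclusion is trivial, this also records the promised equivalence of $\SeqInd$ with $\Delta_0$-induction for the truth predicate.
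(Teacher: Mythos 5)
Your proof is correct, and it uses a genuinely different witness from the paper's. The paper does not take $D(\bar{\phi}) = \bigvee\bar{\phi}$ itself: it first applies $\SeqInd$ to the auxiliary sequence of partial conjunctions $\bigwedge_{i \leq 1}\phi_i, \bigwedge_{i \leq 2}\phi_i, \ldots, \bigwedge_{i \leq c}\phi_i$ to show that a left-grouped conjunction of true sentences is true, and then takes as its outer disjunction the surrogate formula $D(\bar{\phi}) := \neg\bigwedge_{i \leq c}\neg\phi_i$ (the formula displayed in the paper as $\neg\bigvee_{i \leq c}\neg\phi_i$ is evidently a misprint: only the conjunctive version satisfies the append clause of Definition \ref{defi_outer_disjunction}, and it is what the paper's own verification via the true conjunction $\bigwedge_{i \leq c}\neg\phi_i$ actually uses); the second clause for that $D$ is then obtained by the same contraposition you employ. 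You instead keep the genuine big disjunction and run $\SeqInd$ along the negated partial disjunctions $\neg d_k$, exploiting the collapse $Td_{k+1} \equiv Td_k$ once all disjuncts are assumed false; this is the exact dual of the paper's conjunctive computation, and your steps are sound: the progressivity hypothesis is verified internally for all (possibly nonstandard) $k$ using only the left-grouping identity and the compositional axioms for $\vee$ and $\neg$ applied to the sentences $d_k$, and the coded sequence $s$ exists by the provable totality of the relevant primitive recursive function. What your route buys is a direct proof that $\CT^- + \SeqInd \vdash \DCout$, so the standard disjunction is itself an outer disjunction and the inclusion $\CT^- + \DCout \subseteq \CT^- + \SeqInd$ falls out at once, which together with Theorem \ref{th_dcout_implies_sind} closes the circle of Corollary \ref{cor_equivalences_sind_soind_dcout} without passing through a surrogate formula; what the paper's route buys is a reusable conjunctive-correctness lemma (a conjunction of true sentences is true) and a witness whose outer-disjunction clauses are then checked by pure compositionality plus one application of that lemma. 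Both arguments finish by invoking Proposition \ref{prop_outer_disjunctions}, exactly as you do.
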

\begin{proof}
	For a coded sequence $(\phi_1, \ldots, \phi_k)$ of $\LPA$-sentences, let $\bigwedge_{i \leq c} \phi_i$ be their conjunction with parentheses grouped to the left so that we have:
	\begin{displaymath}
		\bigwedge_{i \leq c+1} \phi_i = \bigwedge_{i \leq c} \phi_i \wedge \phi_{c+1}.
	\end{displaymath}
	Using $\SeqInd$, we can show that if $T\phi_i$ holds for every $i \leq c$, then $T \bigwedge_{i \leq c} \phi_i$. We show this by considering an auxiliary sequence
	\begin{displaymath}
		\bigwedge_{i \leq 1} \phi_i, \bigwedge_{i \leq 2} \phi_i, \ldots, \bigwedge_{i \leq c} \phi_i.
	\end{displaymath}
	
	Now, we claim that the formula
	\begin{displaymath}
		D(\bar{\phi}) := \neg \bigvee_{i \leq c} \neg \phi_i
	\end{displaymath}
	is an outer disjunction. It is easy to verify that over  $\CT^-$:
	\begin{displaymath}
		T D(\bar{\phi} \frown \tuple{\phi_{c+1}}) \equiv T D(\bar{\phi}) \vee T\phi_{c+1}. 
	\end{displaymath}
	We show that $D$ satisfies the second condition of outer disjunctions ($\DCout$) by contraposition. If there is no $i \leq c$ such that $T\phi_i$, then $T \neg \phi_i$ holds for every $i$. Consequently, the following conjunction is true:
	\begin{displaymath}
		\bigwedge_{i \leq c} \neg \phi_i
	\end{displaymath}
	which implies that $D \bar{\phi}$ cannot be true. This shows that $D$ is indeed outer disjunction provably in $\CT^- + \SeqInd$. 
\end{proof}
Let us summarise the above results:
\begin{corollary} \label{cor_equivalences_sind_soind_dcout}
	The following theories are equivalent:
	\begin{itemize}
		\item $\CT_0$
		\item $\CT^- + \SeqInd$.
		\item $\CT^- + \SeqOInd$.
		\item $\CT^- + \DCout$.
	\end{itemize}
\end{corollary}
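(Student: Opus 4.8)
The plan is to close a short cycle of inclusions linking the four theories, taking the three substantive steps directly from results already established in this section and supplying only the one routine converse myself. Since Proposition~\ref{prop_seqoind_equiv_ct0} already gives $\CT_0 = \CT^- + \SeqOInd$, it remains only to fold $\CT^- + \DCout$ and $\CT^- + \SeqInd$ into this equivalence.

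The core of the argument is the chain of inclusions
\begin{displaymath}
    \CT_0 \;\supseteq\; \CT^- + \DCout \;\supseteq\; \CT^- + \SeqInd \;\supseteq\; \CT_0 ,
\end{displaymath}
which forces all three theories to coincide. The middle inclusion is immediate from Theorem~\ref{th_dcout_implies_sind} ($\DCout$ proves $\SeqInd$ over $\CT^-$), and the last inclusion combines Theorem~\ref{th_sind_has_outer_disjunction} (that $\CT^- + \SeqInd$ has outer disjunctions) with Proposition~\ref{prop_outer_disjunctions} (that any theory with outer disjunctions contains $\CT_0$). The only step I would have to supply is the first inclusion, i.e. $\CT_0 \vdash \DCout$, which I would derive from the stronger statement $\CT_0 \vdash \DC$: for a fixed sequence $\bar{\phi}$ of length $c$, the formula asserting $T(\bigvee_{i \leq n} \phi_i) \equiv \exists i \leq n \ T\phi_i$ is, as a function of $n \leq c$, a $\Delta_0$ formula in the truth predicate, since the codes $\bigvee_{i \leq n} \phi_i$ are all bounded by the code of $\bigvee \bar{\phi}$. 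A single $\Delta_0$-induction on $n$, using the compositional clause for $\vee$ together with the left-grouping convention $\bigvee_{i \leq n+1} \phi_i = \bigvee_{i \leq n} \phi_i \vee \phi_{n+1}$, then yields $\DC$ for $\bar{\phi}$ and hence $\DCout$.

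With these inclusions in hand, $\CT_0 = \CT^- + \DCout = \CT^- + \SeqInd$, and adjoining Proposition~\ref{prop_seqoind_equiv_ct0} delivers the remaining equivalence with $\CT^- + \SeqOInd$, completing the corollary. I expect no genuine obstacle here: all the difficult directions have already been discharged by Theorems~\ref{th_dcout_implies_sind} and~\ref{th_sind_has_outer_disjunction} together with Propositions~\ref{prop_outer_disjunctions} and~\ref{prop_seqoind_equiv_ct0}, so the corollary is essentially an assembly of prior results. The one point meriting a moment's care is confirming that the partial-disjunction formula above really is $\Delta_0$ in $T$, so that the restricted induction available in $\CT_0$ applies; this is routine given the boundedness of the relevant G\"odel codes.
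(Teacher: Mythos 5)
Your proof is correct and follows essentially the same route as the paper, which presents this corollary as a direct summary of Theorem~\ref{th_dcout_implies_sind}, Theorem~\ref{th_sind_has_outer_disjunction} with Proposition~\ref{prop_outer_disjunctions}, and Proposition~\ref{prop_seqoind_equiv_ct0}. The one step you supply yourself, $\CT_0 \vdash \DC$ by $\Delta_0(T)$-induction on partial left-grouped disjunctions, is exactly the routine fact the paper invokes with the phrase ``$\DC$ is easily provable in $\CT_0$'' in the proof of Theorem~\ref{th_dc_implies_seqoind}, and your justification of its $\Delta_0$ complexity is sound.
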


We conclude this section with some results that clarify the status of $\SeqInd$ and $\SeqOInd$. First of all, let us note that $\SeqInd$ and $\SeqOInd$ are really just some forms of induction axioms and as such they do not require us to assume $\CT^-$ to make sense. They both clearly follow from $\IDelta_0(T)$, the induction scheme for $\Delta_0$-formulae containing the predicate $T$ (which is now treated just as some arbitrary predicate). It turns out that these principles form a strict hierarchy.
\begin{theorem} \label{hierarchy_sind_soind_delta0}
	Over $\PA$, $\IDelta_0(T) \rightarrow \SeqOInd \rightarrow \SeqInd$. Moreover, none of the implication reverses.
\end{theorem}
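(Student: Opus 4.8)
The plan is to prove the two implications by direct induction and to obtain the two non-reversals by constructing models of $\PA$ carrying a unary predicate $T$ that separates the principles; the real work lies in the latter. For $\IDelta_0(T) \rightarrow \SeqOInd$, fix a sequence $s$ satisfying the hypothesis of $\SeqOInd$ and apply $\Delta_0(T)$-induction to the formula $\theta(j) := \forall i < j\, Ts_i$, which is $\Delta_0(T)$ because the decoding $i \mapsto s_i$ has a $\Delta_0$ graph with values bounded by $s$. Here $\theta(0)$ is vacuous and the inductive step $\theta(j) \rightarrow \theta(j+1)$ is exactly the assumed clause $(\forall i < j\, Ts_i) \rightarrow Ts_j$, so $\theta(\lh(s))$ yields $\forall l < \lh(s)\, Ts_l$. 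For $\SeqOInd \rightarrow \SeqInd$, I would check that the hypothesis of $\SeqInd$ for $s$ entails the hypothesis of $\SeqOInd$ for the same $s$: assuming $\forall i < j\, Ts_i$, either $j = 0$ and $Ts_0$ holds by assumption, or $j > 0$ and then $Ts_{j-1}$ holds, so the chain clause $Ts_{j-1} \rightarrow Ts_j$ gives $Ts_j$. An application of $\SeqOInd$ then delivers the conclusion of $\SeqInd$.

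For the non-reversals I would first isolate combinatorial reformulations. Over $\PA$, $\IDelta_0(T)$ is equivalent to ``$T$ is piecewise coded'', i.e.\ $T \cap [0,a)$ is a coded set for every $a$: the forward direction proves by $\Delta_0(T)$-induction on $a$ that some $c < 2^a$ codes $T \cap [0,a)$, while the converse evaluates any $\Delta_0(T)$ instance of induction up to $n$ after replacing $T$ by its code on the boundedly many arguments it can query, thereby reducing it to $\Delta_0$-induction in $\PA$. Next, unwinding the quantifiers shows that $\SeqOInd$ fails for $s$ exactly when the false-position set $\{j < \lh(s) : \neg Ts_j\}$ is nonempty with no least element, whereas $\SeqInd$ fails for $s$ exactly when, in addition, this set is closed under predecessor along $s$ (equivalently, the true positions contain $0$ and are closed under successor). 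Thus a $\SeqInd$-failure is precisely a \emph{monotone} $\SeqOInd$-failure, in which $T$-membership along $s$ passes from true to false exactly at a cut. Each non-reversal therefore reduces to building a $T$ that exhibits the weaker principle's failure pattern while provably excluding the stronger one's.

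To separate $\SeqOInd$ from $\IDelta_0(T)$ I would construct, over a countable $M \models \PA$, a predicate $T$ that is not coded on some initial segment---so $\IDelta_0(T)$ fails---yet is sufficiently generic that along every coded sequence the false positions retain a least element. To separate $\SeqInd$ from $\SeqOInd$ I would arrange $\mathbb{N} \subseteq T$ with $T^c$ accumulating at the standard cut, so that along a suitable coded sequence the false positions form a nonempty set with no least element (refuting $\SeqOInd$), while ensuring that no coded sequence sorts $T$-membership into a monotone, cut-shaped profile (which preserves $\SeqInd$). This is also the conceptual reason the principles can separate here but collapse over $\CT^-$: without compositional clauses linking $T$ at different arguments, $T$ is unstructured and $\SeqOInd$ controls it only ``along coded sequences''. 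In both constructions the natural tool is forcing: conditions are finite partial $0$--$1$ assignments and $T$ is obtained by meeting a chosen family of dense sets---some producing non-codedness or the required accumulation, others forcing a ``descent'' $\neg Ts_i \wedge Ts_k$ with $i < k$ that breaks any emerging monotone profile.

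The main obstacle in both constructions is to verify the weaker principle for all coded sequences simultaneously while keeping the intended failure of the stronger one. The patterns that would refute the stronger principle---a monotone cut-shaped profile, or any false-position set without a least element---are infinitary, non-coded configurations that no single finite condition can settle; the work is to show that no coded sequence can assemble such a configuration out of the generic $T$, and at the same time that exactly the one failure we want does occur. Balancing these opposing requirements---enough genericity to destroy the stronger principle's configurations, yet enough control to preserve the weaker principle and realise the single intended failure---is the delicate heart of the argument, and is where I expect the real technical effort to go.
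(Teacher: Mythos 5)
Your two implications and your quantifier-unwinding of the failure patterns are correct, and your general architecture --- a countable $M \models \PA$, an enumeration of its coded sequences, and a predicate assembled by finitely many new commitments per stage, with $T$ an unbounded subset of the standard cut in the first separation and $\omega \subseteq T$ with the complement accumulating at the standard cut in the second --- is exactly the shape of the paper's two constructions (the paper phrases it as a direct recursion rather than forcing, but that is cosmetic). The genuine gap is in your dense sets for the second separation, precisely at the spot you defer as ``the delicate heart.'' The hypothesis of $\SeqInd$ concerns \emph{adjacent} positions: it is vacuated for $s$ iff $\neg Ts_0$ or some single pair satisfies $Ts_j \wedge \neg Ts_{j+1}$. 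Your proposed pattern $\neg Ts_i \wedge Ts_k$ with $i<k$ (non-adjacent, and in the false-then-true direction) is simply \emph{consistent} with the chain clause: along the identity sequence up to a nonstandard $c$, consider the profile whose true positions are $\omega \cup (e,c]$ for nonstandard $e<c$. It satisfies $Ts_0$ and $\forall j\,(Ts_j \rightarrow Ts_{j+1})$ --- every true-to-false transition is absorbed by a cut, and the false-to-true jump at $e$ is harmless --- yet not all positions are true. This is an $\SeqInd$-failure that contains your pattern in abundance ($i=e$, $k=e+1$), so meeting your dense sets does not preserve $\SeqInd$; it also refutes your ``precisely a monotone $\SeqOInd$-failure'' gloss, since chain-respecting profiles need not be single-crossing. (Non-adjacent pairs in the other direction fail too: true positions $=\omega$ alone satisfies the chain clause.) What is actually needed, and what the paper forces, is an adjacent pair $s(j) \in T$, $s(j+1) \notin T$ for every coded sequence with infinitely many values; arranging this compatibly with the existing finite commitments requires the paper's case analysis on whether $\{j : s(j) \in B\}$ is an initial segment, with overspill locating a position where $s$ exits the current true-set.

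For the $\SeqOInd$-versus-$\IDelta_0(T)$ half you correctly identify the obstruction --- ``the false positions have a least element'' looks like infinitely many commitments when the least false position is nonstandard --- but you leave it unresolved, and the resolution is a one-line trick you should make explicit: keep the positive commitments $A$ finite and inside $\omega$; given $s$, take the \emph{least} $j$ with $s(j) \notin A$ (it exists since $A$ is finite, hence definable). Then every value of $s \res j$ already lies in $A$, so the single commitment $s(j) \notin T$ permanently fixes $j$ as the least false position of $s$, while adding one fresh standard element to $A$ at each stage makes $T$ an infinite, hence non-coded, subset of $\omega$, refuting $\IDelta_0(T)$. With that density argument your first construction closes; your second needs the adjacent-pair correction above before it does.
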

\begin{proof}
	Both implications are straightforward, so let us show that neither reverses.
	
	$(\SeqOInd \nvdash \IDelta_0(T))$. Let $M$ be a countable nonstandard model of $\PA$. Let $s_i, i < \omega$ be an enumeration of all coded sequences in $M$. We will inductively construct a sequence of finite subsets of the standard cut $\omega^M$, $A_i,B_i, i < \omega$ such that for all $i$, $A_i \cap B_i = \emptyset$.  The sets $A$ are approximations to $T$, the sets $B$ are approximations to the complement.

	Let $A_0 = B_0 = \emptyset$. For an arbitrary $i < \omega$, we consider two cases: if all the values of $s_i$ are elements of $A_i$, i.e.:
	\begin{displaymath}
		\set{a \in M}{\exists j< \lh(s_i) \ a = s_i(j)} \subseteq A_i,
	\end{displaymath}
	then we set $A_{i+1} = A_i, B_{i+1} = B_i$. Otherwise, let $j$ be the least element such that $s_i(j) \notin A_{i}$ (this element exists, since $|A_i|$ is standard, and therefore, $A_i$ is arithmetically definable). Let $b = s_i(j)$, let $a \neq b$  be an arbitrary element  in  $\omega \setminus (A_i \cup B_i)$ and set:
	\begin{eqnarray*}
		A_{i+1} & = & A_i \cup \{a\}  \\
		B_{i+1} & = & B_i \cup \{b\}.
	\end{eqnarray*}
	
	Finally, we set $T = \bigcup_{i \in \omega} A_i$. We claim that $(M,T) \models \SeqOInd$, but $(M,T)$ does not satisfy $\IDelta_0(T).$ The latter claim holds, since by construction $T$ is an infinite (hence cofinal) subset of the standard cut $\omega$. To check that the first claim holds, fix any coded sequence $s$. Pick $i < \omega$ such that $s = s_i$. By construction, either all values of $s_i$ are in $A_i$, hence in $T$ or there exists an $l$ such that for all $j<l$, $s_i(j) \in A_{i+1}$, but $s_i(l) \in B_{i+1}$ (which means that it is not in $T$). This shows that $(M,T) \models \SeqOInd$.
	
	($\SeqInd \nvdash\SeqOInd$) As before, fix an arbitrary countable nonstandard model $M \models \PA$ and let $s_i, i <\omega$ be an (external) enumeration of coded sequences from $M$. We inductively construct two sequences of sets $A_i,B_i$ such that for all $i$, $A_i \cap B_i = \emptyset$,  the sets $B_i$ are finite, and the sets $A_i$ contain only finitely many nonstandard elements of $M$.
	
	We construct $A_i, B_i$ as follows: let $A_0 =  \omega^M$ (the standard initial cut of $M$) and $B_0 = \emptyset$. 
	
	For an arbitrary $i$, if the set of values of $s_i$ is (standardly) finite, we set $A_{i+1} = A_i, B_{i+1} = B_i$. Otherwise, we consider two further subcases. Suppose that the set of $j$  such that $s_i(j) \in B_i$ is an initial segment in $M$ (possibly empty). Let $j_0$ be its supremum (it exists by arithmetical induction, since $B_i$ is finite; if the considered set is empty, then by definition, its supremum is $0$). On the other hand, the set of values of $s_i$ is by assumption infinite, so by overspill there exists $j>j_0$ such that $s_i(j) \notin A_i$. Let  $a:= s_i(j-1)$ and let $b:= s_i(j)$.  
	
	If the set 	$\set{j < \lh(s)}{s_i(j) \in B_i}$	 is not an initial segment of $M$, then (again by arithmetical induction and finiteness of $B_i$) there exists some $j$ such that $s_i(j) \notin B_i$ and $s_{i}(j+1) \in B$. Let $a:=s_i(j), b:=s_i(j+1)$. 
	
	In both cases, set:
	\begin{eqnarray*}
		A_{i+1} & = & A_i \cup \{a \} \\
		B_{i+1} & = & B_i \cup \{b \}.
	\end{eqnarray*}
	Finally, let $T = \bigcup_{i \in \omega} A_i$. We claim that $(M,T) \models \SeqInd$, but not $\SeqOInd$. 
	
	To see that $(M,T) \models \SeqInd$, fix any coded sequence $s \in M$. If the set of values of $s_i$ has (standard) finite number of elements, then $T$ cannot violate the sequential induction axiom for $s$. So suppose that the number of values of $s$ is nonstandard. Fix $i$ such that $s = s_i$ in our enumeration. By construction, there exists $j \in M$ such that $s(j) \in A_{i+1} \subset T$ and $s(j+1) \in B_{i+1} \subset M \setminus T$. So it is not the case that for all $x$, $T(s(x)) \rightarrow T(s(x+1))$ and thus the sequential induction axiom for $s$ is satisfied. 
	
	On the other hand, $(M,T)$ does not satisfy $\SeqOInd$. Indeed, fix any $c \in M \setminus \omega$ and consider the identity sequence: $s(i) = i$ for $i =0,1, \ldots, c$. We claim that the sequential order induction fails for this sequence. Let us check that $s$ is progressive, i.e., if for any $j<i$ $s(j) \in T$, then $s(i) \in T$.  
	
	Fix any $i \leq c$. If $i \in \omega$, then $T(i)$ holds. On the other hand, if $i$ is nonstandard, then consider the sequence $t: = s \res i$. It has nonstandardly many values, all of which are strictly below $i$. By construction, there exists an element $b$ occurring in this sequence which is not in $T$. This $b$ witnesses that not all $j<i$ are in $T$. Hence $s$ is progressive. On the other hand, not all terms of $s$ are in $T$ which means that $\SeqOInd$ fails.

\end{proof}

\section{Conservativeness of $\DCin$} \label{sect_dcin}

In the previous section, we have shown that $\CT^- + \DCout$ is not conservative over $\PA$ and that, in fact, it is another incarnation of $\CT_0$. Now we will use methods introduced by \cite{enayatvisser2} to show that the related principle $\DCin$ is actually conservative over $\PA$. The Enayat--Visser technique typically allows us to combine various results, so we can show joint conservativity of several distinct principles. Here we will illustrate this point by requiring that the constructed truth predicate additionally satisfies internal induction. Let us recall that principle. If $(M,T) \models  \CT^-$, then each formula $\phi \in \form^{\leq 1}_{\LPA}(M)$ ``defines'' a set $\set{x \in M}{\phi(\num{x}) \in T}$. \df{Internal induction} $(\INT)$ expresses that each such set satisfies the induction principle:
\begin{displaymath}
	\forall \phi \in \form^{\leq 1}_{\LPA} \Big(T\phi(\num{0}) \wedge \forall x \big(T\phi(\num{x}) \rightarrow T \phi(\num{x+1})\big) \rightarrow \forall x \ T\phi(\num{x})\Big).
\end{displaymath} 
It was essentially observed already by \cite{kkl} that $\CT^- + \INT$ is conservative over $\PA$. This result can be also proved using cut elimination as in \citep{leigh} or the methods invented by Enayat and Visser which we will use in this section in order to show the following theorem:

\begin{theorem} \label{th_dcin_plus_int_conservative}
	$\CT^- + \DCin + \INT$ is conservative over $\PA$.
\end{theorem}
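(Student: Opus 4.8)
The plan is to prove conservativity via the Enayat–Visser method of building a truth class by an amalgamation/compactness argument. I need to show that for any countable recursively saturated model $M \models \PA$, there is a truth class $T$ on $M$ with $(M,T) \models \CT^- + \DCin + \INT$. Conservativity over $\PA$ then follows by the standard argument: any countable model of $\PA$ has a countable recursively saturated elementary extension, and a sentence unprovable in $\PA$ has a countable model which (after passing to such an extension and expanding by $T$) becomes a model of $\CT^- + \DCin + \INT$ refuting the sentence.

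Let me think about how to proceed.

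The Enayat–Visser technique works as follows. We want to construct $T \subseteq M$ satisfying the compositional clauses. The standard approach is to work with the elementary diagram augmented by a fresh predicate $T$ and show that a certain theory is consistent, using the fact that we can find a satisfaction class on recursively saturated models. More specifically, Enayat–Visser show that $\CT^-[M]$ (the theory in the language with $T$ plus the atomic diagram of $M$ plus the compositional axioms relativized to $M$) is consistent by a clever disjunction-free/approximation argument, building finite approximations to $T$ and using compactness, where recursive saturation guarantees that the "local" consistency requirements can be met.

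The key technical tool is usually a lemma asserting that for any finite fragment, one can find a partial valuation consistent with the compositional requirements. To incorporate $\DCin$ and $\INT$, I would need to ensure the approximations also respect these two principles. Since $\INT$ is known to be conservative (and is compatible with the Enayat–Visser construction, as the authors note), the real novelty is handling $\DCin$.

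Here is the plan in detail.

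Setup. Let $M \models \PA$ be countable and recursively saturated. Following Enayat–Visser, I will show that the theory consisting of: (i) $\ElDiag(M)$ (the elementary diagram of $M$ in the language with the new predicate $T$), (ii) the compositional axioms of $\CT^-$ interpreted with quantifiers ranging over $M$, (iii) the $\DCin$ axiom, and (iv) the $\INT$ scheme, is consistent. By recursive saturation and a standard overspill/resplendency argument, a model of this theory yields the desired $T$ on (an elementary extension of) $M$, which suffices for conservativity.

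Consistency via approximations. To prove consistency I use compactness: it is enough to satisfy finitely many instances of the axioms at a time. Because $M$ is recursively saturated, it is enough to exhibit, for each finite set $D$ of sentences of $M$ (a "template" closed under subformulas in the relevant sense), a partial truth assignment that is compositional on $D$, respects $\DCin$ for the disjunctions appearing, and respects the finitely many $\INT$-instances under consideration. The central construction is that of a \emph{partial inductive truth assignment}: given any finite piece of syntax, I extend a valuation respecting the compositional clauses. The standard Enayat–Visser move is to evaluate atomic sentences using $\Tr_0$ (arithmetical truth for standard-depth formulae) and then propagate upward along the compositional clauses; for nonstandard-depth formulae, one uses the recursive saturation of $M$ to find, inside $M$, a \emph{coded} partial satisfaction class on any fixed set of sentences whose depths are bounded by a nonstandard element, provided the collection is "syntactically well-behaved". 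This is where conservativity of $\CT^-$ itself, Theorem \ref{tw_kkl}, and the refinements of Enayat–Visser enter.

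Incorporating $\DCin$ and $\INT$. The crucial point is that $\DCin$ is a \emph{positive, Horn-like} condition: it only ever \emph{forces} a disjunction to be true once a disjunct is true, and never forces a disjunction to be false. This monotonicity is exactly what makes it compatible with building the truth class greedily. In the approximation step, after fixing the values of the disjuncts $\phi_i$, I simply set $T\bigvee\bar\phi$ to true whenever some $T\phi_i$ has been set true; since the compositional clause for binary $\vee$ in $\CT^-$ already forces $T(\alpha \vee \beta) \equiv T\alpha \vee T\beta$ and big disjunctions are built by left-associated binary disjunctions, one checks that making the leftmost true disjunct propagate rightward through the binary clauses is consistent — the obstruction would be a clash with an already-committed value, but because the binary clause is an equivalence there is no room to set a disjunction containing a true disjunct to false. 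Thus $\DCin$ imposes no new constraint beyond the binary compositional clause once we verify that the left-to-right grouping does not create a conflict; I must check exactly this compatibility. For $\INT$, I follow the standard route (already known since Kotlarski–Krajewski–Lachlan and recoverable by the same Enayat–Visser machinery): the approximations are chosen to make $\{x : T\phi(\num x)\}$ a definable-with-parameters set on each finite fragment, so that arithmetical induction in $M$ transfers.

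The main obstacle. The hard part will be the compatibility check for $\DCin$ in the amalgamation step: I must verify that forcing a disjunction true whenever a disjunct is true never contradicts the compositional value that the Enayat–Visser construction independently assigns to that disjunction (or to sentences having it as a subformula). Concretely, the danger is a nonstandard disjunction $\bigvee\bar\phi$ that occurs as a \emph{subformula} of some other sentence $\chi$ in the approximation template: committing $\bigvee\bar\phi$ to true to honour $\DCin$ must be consistent with the value already forced on $\chi$ by compositionality. Because $\CT^-$ already contains the full binary disjunction clause, and big disjunctions unfold into iterated binary ones, $\DCin$ is in fact \emph{derivable from the binary clause together with the value of the relevant disjunct} along the left-associated parse — so the real content is to confirm that the Enayat–Visser partial satisfaction classes can always be chosen to be \emph{closed under this unfolding}, i.e.\ that whenever a disjunct is assigned true, every left-associated super-disjunction in the template is consistently assigned true. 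I would isolate this as a lemma about extending coded partial satisfaction classes to respect the left-associated disjunction clause, and then feed it into the otherwise routine Enayat–Visser compactness argument.
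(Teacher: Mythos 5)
Your framework (an Enayat--Visser compactness/approximation argument, with $\INT$ handled by transferring induction from the base model) is the right one and matches the paper's, but the pivotal claim on which your treatment of $\DCin$ rests is false, and it hides exactly the idea the proof needs. You assert that ``$\DCin$ is in fact derivable from the binary clause together with the value of the relevant disjunct along the left-associated parse,'' so that $\DCin$ ``imposes no new constraint beyond the binary compositional clause.'' This is only true for disjunctions of \emph{standard} length: to propagate truth from a disjunct $\phi_j$ up to $\bigvee_{i\leq c}\phi_i$ requires $c-j$ applications of the clause $T(\alpha\vee\beta)\equiv T\alpha\vee T\beta$, i.e.\ an induction of nonstandard length that $\CT^-$ cannot perform. (Were your claim correct, $\DCin$ would be a theorem of $\CT^-$ and conservativity would follow instantly from Theorem \ref{tw_kkl}; moreover, essentially the same unfolding argument would ``derive'' $\DCout$, contradicting the nonconservativity proved in Section \ref{sec_dcout}.) Because of this, your greedy rule ``set $T\bigvee\bar\phi$ true whenever some $T\phi_i$ has been set true'' is fatally underspecified: it does not say what value a nonstandard-length disjunction receives when no disjunct is (currently) true. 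If you set such disjunctions false, your limit model satisfies full $\DC$, which by Theorem \ref{th_dc_equiv_ct0} is equivalent to $\CT_0$ and \emph{not} conservative -- so over a model of $\PA+\neg\Con(\PA)$ your construction must break down, and indeed the finite templates of the compactness step cannot see the values of the nonstandardly many disjuncts needed to decide whether a witness exists.

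The missing idea, which is the actual content of the paper's proof (Theorem \ref{th_true_infinite_disjunctions} and Lemma \ref{lem_induction_step_in_ev_chain}), is to make \emph{every} disjunction of nonstandard length true under all assignments, \emph{unconditionally} -- not ``when a disjunct is true.'' This is compositionally coherent precisely because of the left-associated grouping: the immediate left subformula of a nonstandard-length disjunction is again a nonstandard-length disjunction, so ``true $\vee$ anything'' propagates upward through the ranks without any clash, which is the compatibility check you correctly identified but could not discharge. It is also compactness-friendly: one adds a per-formula \emph{Disjunction Scheme} ($\forall\alpha\in\Asn(\phi)\ S'(\phi,\alpha)$ for each nonstandard disjunction $\phi$) rather than the single $\DCin$ sentence, since a partial truth assignment deciding only a finite template can never satisfy the global $\DCin$ axiom (any true template sentence embeds as a disjunct of undecided nonstandard disjunctions, refuting it), whereas finitely many scheme instances are trivially satisfiable. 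In the limit, $\DCin$ holds because nonstandard-length disjunctions are true by fiat and standard-length ones are handled by compositionality. Two smaller points: your detour through countable recursively saturated models is unnecessary -- the paper's chain construction works over an arbitrary $M\models\PA$ -- and you would still need the regularity/similarity-class bookkeeping (Definition \ref{def_syntactic_template}) to get a \emph{regular} satisfaction class, which your sketch omits.
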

The theorem is a direct corollary to the following model-theoretic result:
\begin{theorem} \label{th_true_infinite_disjunctions}
	Let $M \models  \PA$. Then there exists $M' \succeq M$ and a $T' \subseteq M'$ such that $(M',T') \models \CT^- + \INT$ and for all sequences $\bar{\phi} \in \SentSeq_{\LPA}(M)$ of nonstandard length, the disjunction $\bigvee \bar{\phi}$ is in $T'$. 
\end{theorem}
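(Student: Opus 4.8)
The plan is to deduce the theorem from the consistency of a first-order theory built over $M$, and to establish that consistency by the Enayat--Visser method. Fix $M \models \PA$, add a fresh constant $c_a$ for each $a \in M$, and let $\mathcal{D} \subseteq M$ collect the codes of all disjunctions $\bigvee\bar\phi$ with $\bar\phi \in \SentSeq_{\LPA}(M)$ of nonstandard length. Consider
\begin{displaymath}
	\Theta = \ElDiag(M) \cup \CT^- \cup \{\INT\} \cup \set{T(c_d)}{d \in \mathcal{D}}.
\end{displaymath}
A model of $\Theta$ restricts to a pair $(M', T')$ with $M \preceq M'$ (this is exactly what $\ElDiag(M)$ secures), with $(M', T') \models \CT^- + \INT$, and with $\bigvee\bar\phi \in T'$ for every nonstandard $M$-sequence $\bar\phi$, since each such disjunction is the denotation of some $c_d$ with $d \in \mathcal{D}$. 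So it is enough to prove that $\Theta$ is consistent.

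The non-arithmetical part of $\CT^- + \INT$ (the compositional clauses, regularity, and internal induction) is finite, the arithmetical induction of $\CT^-$ already lives in $\ElDiag(M)$, and the remaining axioms $T(c_d)$ are atomic. By compactness it therefore suffices, for each finite $\mathcal{D}_0 \subseteq \mathcal{D}$, to produce a model $(M', T') \models \CT^- + \INT$ with $M \preceq M'$ in which every member of $\mathcal{D}_0$ is true; in fact the verification below is uniform in $\mathcal{D}$, so one may build a single such model directly. This is where I would invoke the Enayat--Visser construction: it yields a full compositional truth predicate on an elementary extension of $M$, built as the union of a chain along which witnesses for the quantifier clauses are added, provided that the prescribed truth values form a coherent assignment --- no finite pattern of compositional clauses may force a value opposite to a prescribed one.

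The conceptual heart is the coherence of ``every nonstandard $M$-disjunction is true''. Let $d = \bigvee\bar\phi \in \mathcal{D}$ and write $d = d_L \vee d_R$ for its outermost disjunction, so that $d_L = \bigvee\big(\bar\phi \res (\lh(\bar\phi) - 1)\big)$. Since $\lh(\bar\phi)$ is nonstandard, so is $\lh(\bar\phi) - 1$, whence $d_L \in \mathcal{D}$ as well. Thus the sole compositional clause constraining $d$, namely $T(d) \equiv T(d_L) \vee T(d_R)$, is met by declaring $d$ and $d_L$ true, independently of $T(d_R)$. Iterating, each clause met while peeling a prescribed disjunction refers to a strictly shorter disjunction that is still of nonstandard length and hence still prescribed true; because there is no least nonstandard number, the peeling never reaches a standard-length disjunction, and so never arrives at the point where compositionality would force a false disjunct. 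This is precisely the absence of $\Delta_0(T)$-induction that separates $\DCin$ from $\DCout$.

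The hard part will be reconciling the disjunction requirements with $\INT$. The danger is that an arithmetical $\phi(v)$ may have nonstandard disjunctive length, say $\phi(v) = \bigvee_{i \leq c}\theta_i(v)$ with $c$ nonstandard; then $\phi(\num{x}) \in \mathcal{D}$ for every $x \in M$, so $\set{x}{T\phi(\num{x})}$ is forced to contain all of $M$. Were it to contain nothing more, it would be the cut $M$ --- a set with $0$, closed under successor, yet not exhausting $M'$ --- in flat contradiction with $\INT$. I would remove this obstruction by enlarging the prescription: whenever $\phi(\num{x})$ is forced true throughout $M$ I also declare $\phi(\num{x})$ true for every $x \in M'$ (and similarly for the left-prefix formulas produced by peeling). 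The same ``no least nonstandard number'' argument shows this larger prescription remains coherent, while now every set $\set{x}{T\phi(\num{x})}$ that is forced on $M$ is forced on all of $M'$, so the cut cannot arise; the sets attached to formulas of standard disjunctive shape carry no prescription and are handled exactly as in the known proof that $\CT^- + \INT$ is conservative. Carrying out this joint bookkeeping --- coherence for the compositional clauses, the prescribed disjunctions, and internal induction simultaneously --- is the technical core of the argument.
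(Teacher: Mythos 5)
Your outline travels the same road as the paper's proof (an Enayat--Visser-style chain in which all disjunctions of nonstandard length are prescribed true, the key observation that the left peel of such a disjunction again has nonstandard length, and the realisation that the prescription must extend beyond $M$ to save $\INT$), but it has a genuine gap exactly where you flag ``the technical core.'' First, your top-level compactness does no work: since the truth-theoretic axioms of $\CT^- + \INT$ are finitely many sentences, every finite fragment of your $\Theta$ already contains all of them, so the fragment problem---a \emph{full} compositional truth class with internal induction on an elementary extension of $M$ making finitely many prescribed nonstandard disjunctions true---is just the original theorem with $\mathcal{D}_0$ in place of $\mathcal{D}$. Nothing has been reduced, and there is no off-the-shelf ``coherence criterion'' for Enayat--Visser that one can invoke; full compositionality is never obtained by a single compactness argument but only in the limit of a chain $(M_n, S_n)$, where at each step compositionality, preservation, regularity, and the disjunction requirement are posited as \emph{schemes} ranging over the formulas of $M_n$, so that a finite fragment genuinely mentions only finitely many formulas.

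Second, and more importantly, you leave the $\INT$ bookkeeping open, and this is where the paper's decisive mechanism lives. In the induction-step lemma, the finite fragment is witnessed inside $M_n$ \emph{itself}: the finitely many relevant formulas are grouped into syntactic-similarity classes ordered by the direct-subformula relation; nonstandard-length disjunctions (and whatever $S_n$ already holds) are declared true at the minimal classes, and one extends upward compositionally. This yields simultaneously (i) the disjunction scheme---your peeling argument becomes a legitimate finite rank induction, since in $\phi = \psi \vee \eta$ the formula $\psi$ is again a nonstandard-length disjunction of lower rank; (ii) regularity, via syntactic templates, which your sketch never addresses; and (iii) induction: the new predicate $S'$ is arithmetically definable from finitely many predicates $S_{\phi}$ with $\phi \in \dom(S_n)$, and $(M_n, S_{\phi})_{\phi}$ satisfies full induction by the chain invariant, so \emph{every} set defined from $S'$ satisfies induction automatically---for all formulas at once, not merely those of nonstandard disjunctive shape. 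Your patch (prescribing $\phi(\num{x})$ true for all $x$, which corresponds to the paper's invariant that prescribed disjunctions hold under \emph{all} assignments at every stage) repairs only the one definable-cut failure you spotted; ``the known proof'' that $\CT^- + \INT$ is conservative cannot simply be cited alongside your prescription, because the definability argument \emph{is} the joint verification you defer, and checking its compatibility with the disjunction scheme is precisely the content of the paper's Lemma on the induction step.
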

In other words, we will construct a model in which every disjunction with infinitely many disjuncts is true. Such a model clearly satisfies $\DCin$: fix any sequence of sentences $\bar{\phi}$ and suppose that there is $i \leq \lh(\bar{\phi})$ such that $T'\phi_i$. If $\bar{\phi}$ has nonstandard length, then $\bigvee \bar{\phi} \in T'$ by assumption. On the other hand, if the length of $\bar{\phi}$ is standard, then $\bigvee \bar{\phi} \in T'$ directly by the compositional axioms. So it is enough to prove Theorem \ref{th_true_infinite_disjunctions}.

In the proof, we will construct an elementary sequence of models $M_i \models \PA$ and a sequence of satisfaction classes $S_i \subset M_i^2, i<\omega$ such that $S_{i+1}$ will satisfy compositional clauses for formulae in the model $M_i$. Let us first define what a satisfaction class actually is. Below, if $s$ is (a code of) a term and $\alpha$ is a function whose domain contains the free variables of $s$ (an $s$-assignment), by $s^{\alpha}$ we mean the formally computed value of $s$ under the valuation $\alpha$. For instance, if $\alpha$ ascribes the value $2$ to $x$ and $5$ to $y$, then 
\begin{displaymath}
	\mathbb{N} \models \qcr{SSx \times Sy}^{\alpha} = 24.
\end{displaymath}
If $\alpha, \beta$ are assignments and $v$ is a variable, then by $\beta \sim_v \alpha$ we mean that $\dom(\beta) \supseteq \dom(\alpha) \cup \{v\}$ and $\beta(w) = \alpha(w)$ for every $w \in \dom(\alpha) \setminus \{v\}$. In other words, $\beta$ is just like $\alpha$, possibly except for the value $\beta(v)$ which is not even required to be defined for $\alpha$. 

\begin{definition} \label{def_satisfaction}
	Let $M \models \PA$ and let $\phi \in \form_{\LPA}(M)$. By the \df{compositional clauses} for $\phi$, $\Comp(\phi)$, we mean the disjunction of the following sentences:
	\begin{enumerate}
		\item $\exists s,t \in \Term_{\LPA} \ \Big(\phi = (s=t) \wedge \forall \alpha \in \Asn(\phi) \ \Big(S(\phi,\alpha) \equiv s^{\alpha} = t^{\alpha} \Big) \Big).$
		\item $\exists \psi \in \form_{\LPA} \ \Big(\phi = (\neg \psi) \wedge \forall \alpha \in \Asn(\phi) \ \Big(S(\phi,\alpha) \equiv \neg S(\psi,\alpha)\Big)\Big).$
		\item $\exists \psi, \eta \in \form_{\LPA}(M) \ \Big(\phi = (\psi \vee \eta) \wedge \forall \alpha \in \Asn(\phi) \Big(S(\phi,\alpha) \equiv S(\psi,\alpha) \vee S(\eta,\alpha) \Big)\Big).$
		\item $\exists \psi, \eta \in \form_{\LPA}(M) \ \Big(\phi = (\psi \wedge \eta) \wedge \forall \alpha \in \Asn(\phi) \Big(S(\phi,\alpha) \equiv S(\psi,\alpha) \wedge S(\eta,\alpha) \Big)\Big).$
		\item  $\exists \psi \in \form_{\LPA}(M) \exists v \in \Var \ \Big(\phi = (\exists v \psi) \wedge \forall \alpha \in \Asn(\phi) \Big(S(\phi,\alpha) \equiv \exists \beta \sim_v \alpha S(\psi, \beta)\Big)\Big).$  
		\item  $\exists \psi \in \form_{\LPA}(M) \exists v \in \Var \ \Big(\phi = (\forall v \psi) \wedge \forall \alpha \in \Asn(\phi) \Big(S(\phi,\alpha) \equiv \forall \beta \sim_v \alpha \ S(\psi, \beta)\Big)\Big).$  
	\end{enumerate}

	We say that $S \subset M^2$ is a \df{satisfaction class} if there exists a subset $D  \subseteq \form_{\LPA}(M)$ such that the following holds:
	\begin{itemize}
		\item If $(\phi,\alpha) \in S$, then $\phi \in \form_{\LPA}(M)$ and $\alpha \in \Asn(\phi)$.
		\item If $(\phi,\alpha) \in S$ for some $\alpha \in \Asn(\phi)$, then $\Comp(\phi)$ holds. 
		\item If $(\phi,\alpha) \in S$, then $\phi \in D$ or $\phi = \neg \psi$ for some  $\psi \in D$.
		\item If $\phi \in D$, then $\Comp(\phi)$ holds.
		\item If $\phi \in D$ and $\psi$ is a direct subformula of $\phi$, then $\psi \in D$.
		\item If $\phi \in D$, then for all $\alpha \in \Asn(\phi)$, $(\phi,\alpha) \in S$ or $(\neg \phi,\alpha) \in S.$
	\end{itemize}
	By the \df{domain} of $S$, $\dom(S)$, we mean the maximal set $D$ satisfying the above conditions (the maximality requirement is needed, as there is a slight ambiguity in how to count negations of formulae which are not satisfied by any assignment). A satisfaction class is \df{full} iff its domain is the whole set $\form_{\LPA}(M)$. 
\end{definition}
The definition of a satisfaction class presented above involves some technical requirements which might seem to be slightly too restrictive, so let us briefly explain our motivations. The definition of a satisfaction class completely agrees with the usual one in the case of full satisfaction classes.  However, if $S$ is not a full satisfaction class, i.e., if it does not satisfy compositional conditions for all formulae, it becomes ambiguous whether we should interpret the fact that $(\phi,\alpha) \notin S$ as saying that $\phi$ is not satisfied under the valuation $\alpha$ or that $S$ simply does not decide $\phi$ which might be a technical nuisance in some proofs or statements of results. This includes results and arguments in this article, though the specific difficulties we overcome with this definition will not be really visible. Therefore, we essentially require that for every formula $\phi$ we can unambiguously tell whether it is decided by $S$ or not. This requirement is harmless: Define a pre-satisfaction class on $M$ as a set $S \subset M^2$ such that it satisfies compositional axioms on some set $D$ of formulae closed under direct  subformulae and does not contain any pair $(\phi,\alpha)$ for $\phi \notin D$. Then for any pre-satisfaction class, we can canonically define a satisfaction class extending it. Simply take the maximal set $D$ with the two mentioned properties and extend $S$ with all pairs $(\neg \phi,\alpha)$ such that $\phi \in D$ and $(\phi,\alpha) \notin S.$ One can check that after such a one-step extension, we obtain a satisfaction class in our sense. 

Satisfaction classes and truth predicates satisfying $\CT^-$ are very closely related objects. However, the link between them is not as direct as one could hope (see \citep{wcislo_definability_automorphisms} for a discussion of this connection). We have to introduce a certain technical condition in order to switch between them in a completely unproblematic manner.

Let $M \models \PA$, $\phi, \psi \in \form_{\LPA}(M)$, $\alpha \in \Asn(\phi), \beta \in \Asn(\psi)$. We say that the pairs $(\phi,\alpha), (\psi,\beta)$ are \df{extensionally equivalent}, $(\phi,\alpha) \simeq (\psi,\beta)$ if there exists a formula $\eta$ and two sequences of closed terms $\bar{s}, \bar{t} \in \ClTermSeq_{\LPA}$ of the same length such that the sequences of their values are equal and 
\begin{displaymath}
	\phi[\alpha] = \eta(\bar{s}), \psi[\beta] = \eta(\bar{t}),
\end{displaymath}
where $\phi[\alpha]$ is a sentence obtained by substituting in $\phi$ the numeral $\num{\alpha(v)}$ for each variable $v$. For instance, let $\phi = \exists x (x+y = SS0)$, $\psi = \exists x (x + u\times v = w + S0)$, and let  $\alpha \in \Asn(\phi), \beta \in \Asn(\psi)$ be such that $\alpha(y) = 2, \beta(u) = 2, \beta(v) = \beta(w) = 1$. Then $(\phi,\alpha) \simeq (\psi,\beta)$ as witnessed by the formula 
\begin{displaymath}
	\eta = \exists x (x + v_0 = v_1)
\end{displaymath}
and the terms $(SS0, SS0), (SS0\times S0,S0 + S0)$.
Finally, we say that a satisfaction class $S$ is \df{regular} iff for all pairs $(\phi,\alpha) \simeq (\psi,\beta)$, $(\phi,\alpha) \in S$ iff $(\psi,\beta) \in S$.

	Before describing the relation between the interpretations of the truth predicate and regular satisfaction classes, let us introduce one more definition.

\begin{definition} \label{def_int_for_sat} \hfil
	\begin{itemize} 
		\item If $M \models \PA$ and $S\subset M^2$ is a satisfaction class, we say that \df{internal induction} holds for $\phi \in \form^{\leq 1}_{\LPA}(M)$ if for every $v \in \FV(\varphi)$ and every $\alpha \in \Asn(\phi)$, the following holds:
		\begin{displaymath}
			(\phi,\alpha[0/v]) \in S \wedge \forall x \Big((\phi,\alpha[x/v])\in S \rightarrow (\phi,\alpha[x+1/v])\in S\Big) \rightarrow \forall x \ (\phi,\alpha[x/v]) \in S. 
		\end{displaymath}
	Above, $\alpha[y/v]$ denotes the assignment $\alpha'$ which is identical to $\alpha$, except for the fact that $\alpha'(v) = y$. 
		\item We say that internal induction holds for $S$ if it holds for every $\phi \in \form^{\leq 1}_{\LPA}(M)$.
	\end{itemize}
	Notice that the formula $\phi$ need not be in the domain of $S$
\end{definition} 

The following proposition establishes the link between truth predicates satisfying $\CT^-$ (possibly with $\INT$) and regular satisfaction classes (possibly with internal induction). It can be proved via a direct verification.

\begin{proposition} \label{prop_regular_satisfaction_equiv_ctminus} \hfil
	\begin{itemize}
		\item[(a)] Let $(M,T) \models \CT^-$ and let $S = \set{(\phi,\alpha) \in M^2}{\phi \in  \form_{\LPA}(M), \alpha \in \Asn(\phi), \phi[\alpha] \in T}.$ Then $S$ is a full regular satisfaction class.
		
		\item[(b)] Conversely, let $M \models \PA$, let $S\subset M^2$ be a full regular satisfaction class in $M$ and let $T = \set{\phi \in \Sent_{\LPA}(M)}{(\phi,\emptyset) \in S}$. Then $(M,T) \models \CT^-$.
		\item[(c)] Let $(M,T) \models \CT^- + \INT$ and let $S$ be defined as in (a). Then $S$ is a full regular satisfaction class and internal induction holds for $S$.
		\item[(d)] Conversely, let $S$ be a full regular satisfaction class in $M$ such that internal induction holds for $S$. Let $T$ be defined as in (b). Then $(M,T) \models \CT^- + \INT$. 
	\end{itemize}
\end{proposition}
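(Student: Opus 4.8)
The plan is to treat the four parts as two symmetric pairs, (a)/(b) concerning $\CT^-$ and (c)/(d) adding $\INT$, and in each direction to unwind the definitions clause by clause. The engine driving the whole verification is a single translation observation: for every $\phi \in \form_{\LPA}(M)$ and every $\alpha \in \Asn(\phi)$, the pair $(\phi,\alpha)$ is extensionally equivalent to the sentence-pair $(\phi[\alpha],\emptyset)$, witnessed by taking the formula $\phi[\alpha]$ itself together with empty term sequences (using $(\phi[\alpha])[\emptyset] = \phi[\alpha]$). Consequently, for any regular satisfaction class $S$ one has $(\phi,\alpha) \in S \iff (\phi[\alpha],\emptyset) \in S$, which is exactly what is needed to pass between the assignment-based bookkeeping of $S$ and the numeral-substitution bookkeeping of $T$. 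I expect this passage — and in particular its interaction with the quantifier clauses — to be the only genuinely delicate point; everything else is routine bookkeeping.

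For part (a), I would set $D := \form_{\LPA}(M)$ and check the bullet points of Definition \ref{def_satisfaction} in turn. Fullness and the clause ``$(\phi,\alpha)\in S$ or $(\neg\phi,\alpha)\in S$'' both follow at once from the negation axiom of $\CT^-$, which guarantees that exactly one of $T\phi[\alpha]$, $T\neg(\phi[\alpha])$ holds, together with $(\neg\phi)[\alpha] = \neg(\phi[\alpha])$. That $\Comp(\phi)$ holds for every $\phi$ is a case split on the outermost connective: for the atomic case I would use $\val{s[\alpha]} = s^{\alpha}$ and the term-equation axiom; for $\vee$, $\wedge$, $\neg$ the corresponding compositional axiom of $\CT^-$ applies after observing that substitution commutes with the connective, e.g. $(\psi \vee \chi)[\alpha] = \psi[\alpha] \vee \chi[\alpha]$; and for the quantifier clauses I would combine the axiom $T(\exists v\,\psi) \equiv \exists x\, T\psi(\num{x})$ with the translation observation, matching the witness $x$ with the assignment $\beta \sim_v \alpha$ given by $\beta(v) = x$. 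Regularity of $S$ is precisely the regularity axiom $\REG$ read through the translation observation.

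Part (b) is the reverse reading of the same equivalences. Given a full regular $S$, I would define $T$ via the empty assignment and verify each axiom of $\CT^-$ by specialising the compositional clauses of $S$ to $\alpha = \emptyset$, using the translation observation to rewrite $(\psi,\beta)\in S$ as $T\psi[\beta]$ whenever a bound variable is instantiated. Fullness of $S$ is what makes $T$ total and yields the negation equivalence (and the first axiom $T(x) \to \Sent_{\LPA}(x)$), while regularity of $S$ delivers $\REG$.

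For (c) and (d) there is almost nothing new to do. Since (a) and (b) already furnish the correspondence at the level of $\CT^-$, it remains only to observe that for $\phi \in \form^{\leq 1}_{\LPA}(M)$ with $\FV(\phi) = \{v\}$ the identity $\phi[\alpha[x/v]] = \phi(\num{x})$ holds, so that the internal-induction statement for $S$ at $\phi$ (Definition \ref{def_int_for_sat}) is literally the instance of $\INT$ for $T$ at $\phi$, and conversely. Thus internal induction transfers across the correspondence verbatim in both directions, completing the proof.
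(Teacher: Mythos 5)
Your proposal is correct: the paper itself gives no argument for this proposition (it merely notes that it ``can be proved via a direct verification''), and your clause-by-clause unwinding, driven by the observation that $(\phi,\alpha) \simeq (\phi[\alpha],\emptyset)$ so that regularity lets one pass between $S$-membership and $T$-membership, is exactly the intended direct verification --- including the two genuinely load-bearing points, namely using $\REG$ to derive the numeral-form quantifier axioms in part (b) and the identity $\phi[\alpha[x/v]] = \phi(\num{x})$ to transfer internal induction verbatim in (c) and (d).
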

Now we are ready to prove Theorem \ref{th_true_infinite_disjunctions}. \\

\begin{proof}[Proof of Theorem \ref{th_true_infinite_disjunctions}, general idea.]
	
	Let $M \models \PA$. We will find an elementary extension $M \preceq M'$ and a full regular satisfaction class $S$ on $M'$ with internal induction  such that for every disjunction $\phi \in \Sent_{\LPA}(M)$ with nonstandardly many disjuncts, $(\phi,\emptyset) \in S$. Then by Proposition \ref{prop_regular_satisfaction_equiv_ctminus}, there exists $T \subset M'$ such that $(M',T) \models \CT^- + \INT$ and all disjunctions with nonstandardly many disjuncts are in $T$.

We will construct $(M',S)$ in stages. We will produce a sequence $(M_n,S_n)$ of models such that:
\begin{itemize}
	\item $M_0 := M, S_0 = \emptyset$.
	\item The models $M_i \models \PA$ form an elementary chain. 
	\item $S_{n+1}\subset M_{n+1}^2$ is a regular satisfaction class whose domain contains $\form_{\LPA}(M_n)$.
	\item For every $n$, $S_n \subset S_{n+1}$. 
	\item For every $n$, the model $M_n$ expanded with all the predicates $S_{\phi}, \phi \in \form_{\LPA}(M_n)$ defined by $S_{\phi}(x) \equiv S_n(\phi,x)$ satisfies the full induction scheme.
	\item If $\phi:= \bigvee \phi_i$ is a disjunction with nonstandardly many disjuncts and $\phi$ is in the domain of $S_{n+1}$,
	 then $(\phi,\alpha) \in S_{n+1}$ for every $\alpha \in \Asn(\phi)$.
\end{itemize} 
Finally, we set $M' = \bigcup M_n$ and $S = \bigcup S_n$. By a straightforward verification, we check that $S$ is indeed a full regular satisfaction class and it clearly makes all disjunctions with infinitely many disjuncts true and satisfies internal induction.

To complete the proof, it is enough to check that a sequence $(M_n,S_n)$ as above can be produced. This will be demonstrated in Lemma \ref{lem_induction_step_in_ev_chain} which takes care of the induction step for $n>1$ (the existence of $(M_1,S_1)$ can be proved with a very similar, slightly simpler argument).
\end{proof}

Some care is needed in order to make sure that the satisfaction classes which we will construct are indeed regular. Before we state and prove the induction step lemma, we will introduce one more technical notion. (It appeared in the same context earlier, e.g. in \citep{lelyk_wcislo_local_collection}.)

\begin{definition} \label{def_syntactic_template} 
	Let $\phi \in \form_{\LPA}$. By a \df{syntactic template} of $\phi$, we mean the smallest formula $\widehat{\phi}$ such that the following conditions are satisfied:
	\begin{enumerate}
		\item There exists a sequence of arithmetical terms $\bar{s}$ (not necessarily closed) such that $\phi = \widehat{\phi}(\bar{s})$. 
		\item No variable occurs in $\widehat{\phi}$ both free and bound.
		\item No free variable occurs in $\widehat{\phi}$ more than once.
		\item No closed term occurs in $\widehat{\phi}$.
		\item No complex term containing only free variables occurs in $\widehat{\phi}$.
	\end{enumerate}
\end{definition} 
For instance, if $\phi = \exists x (SSx+Sy = (z \times (y + S0)) \times x)$, then 
\begin{displaymath}
	\widehat{\phi} = \exists x (SSx + v_0 = v_1 \times x),
\end{displaymath}
where $v_0, v_1$ are chosen so as to minimise the formula. 

We say that $\phi$ and $\psi$ are \df{syntactically similar} if they have the same syntactic template. We denote it with $\phi \sim \psi$. Notice that if $(\phi,\alpha) \simeq (\psi,\beta)$ for some $\alpha, \beta$, then  $\phi \sim \psi$. 

\begin{lemma} \label{lem_induction_step_in_ev_chain}
	Let $M \models \PA$, let $S \subset M^2$ be a regular satisfaction class such that:
	\begin{itemize}
		\item The model $(M,S_{\phi})_{\phi \in M}$ satisfies full induction, where  $S_{\phi}(x)  \equiv S(\phi,x)$.
		\item If $\phi$ is a disjunction with nonstandardly many disjuncts in the domain of $S$, then $(\phi,\alpha) \in S$ for all $\alpha \in \Asn(\phi)$. 
	\end{itemize}
	Then there exists an elementary extension $M \preceq M'$ and a regular satisfaction class $S' \supseteq S$ such that $(M',S')$ satisfies the above conditions and $\dom(S') \supseteq \form_{\LPA}(M)$. 
\end{lemma}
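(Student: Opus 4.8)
The plan is to build the extension $(M',S')$ via a compactness-plus-resplendence argument in the style of Enayat--Visser, where the new satisfaction class $S'$ is obtained as a definable (or type-realizing) expansion of an elementary extension $M'$ of $M$. First I would set up the relevant theory. Working in the language of $M$ augmented with a predicate symbol for $S$ (so that $(M,S_\phi)_{\phi}$ is already a model of $\PA$ in the expanded language) and a fresh binary predicate symbol $S'$, I would write down a theory $\Gamma$ consisting of: the elementary diagram of $(M,S)$ (to guarantee $M \preceq M'$ and $S \subseteq S'$); the axioms saying $S'$ is a regular satisfaction class whose domain contains $\form_{\LPA}(M)$; the full induction scheme for the expanded language (so that the inductive hypothesis on induction transfers to $(M',S')$); and the axiom scheme forcing every disjunction with nonstandardly many disjuncts in the new domain to be satisfied under every assignment. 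The goal is to show $\Gamma$ is consistent, realize a suitable type, and read off $(M',S')$.

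The heart of the matter is producing, inside (an elementary extension of) $M$, a genuine regular satisfaction class that decides all of $\form_{\LPA}(M)$ while cohering with the old $S$ on its domain \emph{and} making all nonstandard disjunctions true. Here I would invoke the standard Enayat--Visser machinery: one shows that for every finite fragment of $\Gamma$ there is a model, which by compactness typically reduces to a \emph{local} statement provable in $\PA$ --- namely that for every (coded, possibly nonstandard) finite set of formulae $X \in M$, $M$ believes there is a partial valuation on $X$ obeying the compositional clauses, regularity, and the disjunction clause on the members of $X$ that lie in the old domain of $S$. This is the ``arithmetized completeness / partial inductive satisfaction class'' step, and it is where the hypothesis that $(M,S)$ satisfies full induction in the expanded language is essential: it lets one run an internal induction on formula rank to construct the partial valuation. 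The syntactic template apparatus of Definition \ref{def_syntactic_template} is what I would use to handle regularity cleanly, since deciding one formula $\phi$ forces a coherent decision on every $\psi \sim \phi$, and templates give a canonical representative for each $\simeq$-class so that these forced decisions never conflict.

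After establishing consistency, I would obtain $(M',S')$ by a resplendence or recursive-saturation argument, or simply by taking a model of $\Gamma$ whose $\PA$-reduct is an elementary extension of $M$; the elementary diagram axioms guarantee $M \preceq M'$ and $S \subseteq S'$, while the chosen axiom schemes guarantee the three required closure properties (regularity, full induction in the expansion, truth of all nonstandard disjunctions). One must then verify $\dom(S') \supseteq \form_{\LPA}(M)$, which follows because the regularity/domain axioms force every $\phi \in \form_{\LPA}(M)$ to be decided. The main obstacle I anticipate is the local construction of the partial satisfaction class: one must simultaneously respect the compositional recursion, preserve the prior commitments recorded in $S$ on $\dom(S)$, maintain regularity across each $\simeq$-class, and impose truth on the nonstandard disjunctions --- and these last two requirements can interact, since a nonstandard disjunction and one of its regular variants must receive the same value. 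Checking that the disjunction-truth clause is consistent with compositionality (in particular that forcing $\bigvee\bar\phi$ true never forces a contradiction through its subformulae) is the delicate point, and I expect it to rely precisely on the second hypothesis on $S$ together with the left-grouped convention for big disjunctions, so that the recursion on disjunction length stays coherent.
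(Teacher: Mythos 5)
Your skeleton---a theory consisting of the elementary diagram, compositionality instances $\Comp(\phi)$ for each $\phi \in \form_{\LPA}(M)$, preservation of $S$, regularity, an induction scheme, and a scheme forcing nonstandard-length disjunctions to be satisfied, followed by compactness---is exactly the paper's setup ($\Theta$ in the paper's proof). But the heart of the argument, consistency of finite fragments, is carried out the wrong way in your proposal, and as described it would fail. You reduce each finite fragment to a \emph{local} statement provable in $\PA$: that for every coded (possibly nonstandard) finite set $X$ of formulae, $M$ internally believes there is a partial valuation on $X$ respecting compositionality, regularity, and the disjunction clause, built by an \emph{internal} induction on formula rank. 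This cannot work here: the disjunction scheme applies precisely to disjunctions with \emph{nonstandardly} many disjuncts, and ``nonstandard length'' is not definable in $M$, so the local statement you want $\PA$ to prove is not even expressible---no internally coded valuation can be required to make exactly the nonstandard-length disjunctions true. (Worse, an internal valuation that is compositional on a coded set sufficiently closed under subformulae would, by the very induction you invoke, evaluate a long disjunction as true iff some disjunct is, which is exactly what must be avoided.) The paper's fragment step is instead entirely \emph{external}: the finitely many formulae $\phi_1,\ldots,\phi_n$ mentioned in the fragment are grouped into finitely many syntactic-similarity classes, one considers the direct-subformula relation $\lhd$ on these classes, and $S'$ is defined \emph{on $M$ itself} (no extension, no arithmetized completeness) by an external recursion of finite length on $\lhd$-rank: minimal classes receive the true atomic equations, the pairs already in $S$, and \emph{all} pairs $(\phi,\alpha)$ where $\phi$ is a disjunction of nonstandard length; higher ranks are closed under the compositional clauses. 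Your worry about the disjunction scheme clashing with compositionality then evaporates: by the left-grouping convention, the left immediate subformula of a nonstandard-length disjunction is again one, so truth simply propagates up the rank recursion.

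Two further misplacements. You locate the use of the full-induction hypothesis inside the internal rank induction; in the paper it appears only in the \emph{verification}: the externally defined $S'$ is arithmetically definable from finitely many slices $S_{\phi}$ with $\phi \in \dom(S)$, so the expanded structure inherits the induction scheme from the hypothesis that $(M,S_{\phi})_{\phi \in M}$ satisfies full induction. Relatedly, your ``full induction scheme for the expanded language'' is dangerously ambiguous: the paper is careful to admit induction only for the unary slice predicates $S'_{\phi}$ with $\phi$ a fixed element of $M$ (never treating $\phi$ as a variable, and never allowing the binary $S'$ in induction axioms), because in the compactness model $S'$ may fail compositionality badly on $\form_{\LPA}(M')\setminus\form_{\LPA}(M)$ and must be restricted to pairs $(\phi,\alpha)$ with $\phi$ syntactically similar to an element of $M$---a restriction under which slice induction survives but binary-$S'$ induction need not. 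Finally, the closing appeal to resplendence or recursive saturation is both unnecessary and unavailable ($M$ is arbitrary, and the theory contains the full elementary diagram plus schemes indexed by nonstandard elements): an arbitrary model of the theory, with $S'$ restricted as above, already witnesses the lemma.
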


\begin{proof}
	Let $M, S$ be as in the assumptions of the lemma. Let us consider a theory $\Theta$ in a language $\LPA$ extended with an additional predicate $S'$ and a family of auxiliary predicates $S'_{\phi}, \phi \in M$ which comprises the following axioms:
	\begin{itemize}
		\item $\ElDiag(M)$, the elementary diagram of $M$.
		\item $\forall x \ S'_{\phi}(x) \equiv S'(\phi,x)$ where $\phi \in \form_{\LPA}(M)$. (The definition of $S'_\phi$).
		\item $\Comp(\phi)$ for $S'$, where $\phi \in \form_{\LPA}(M)$. (Compositionality Scheme).
		\item $S'(\phi,\alpha)$, where $(\phi,\alpha) \in S$. (Preservation Scheme).
		\item $\forall \phi, \psi \in \form_{\LPA} \forall \alpha \in \Asn(\phi), \beta \in \Asn(\psi) \ \Big((\phi,\alpha) \simeq (\psi,\beta) \rightarrow S'(\phi,\alpha) \equiv S'(\psi,\beta)\Big).$ (Regularity Axiom)
		\item The full induction scheme for formulae in the language  $\LPA + S'_{\phi}, \phi \in \form_{\LPA}(M)$. (Internal Induction).
		\item $\forall \alpha \in \Asn(\phi) \ S'(\phi,\alpha)$, where $\phi = \bigvee_{i \leq c} \phi_i$ for some $(\phi_i)_{i \leq c} \in \formSeq_{\LPA}(M)$ and a nonstandard $c$. (Disjunction Scheme).
	\end{itemize}
	The predicates $S'_{\phi}$ are introduced to concisely express what form of internal induction we accept. Note that in the Internal induction scheme, we do not allow formulae containing the predicate $S'$. Crucially, we are not allowed to treat $\phi$ like a variable. On the other hand, the form of induction we accept is \textit{prima facie} stronger than the internal induction condition introduced in Definition  \ref{def_int_for_sat}.

	If $(M',S',S'_{\phi})_{\phi \in M} \models \Theta$, then by restricting $S'$ to $(\phi,\alpha)$ such that $\phi \sim \phi'$ for some $\phi' \in M$ and ignoring the predicates $S'_{\phi}$, we obtain a model satisfying the conclusion of the lemma (note that $S'$ itself need not be a satisfaction class, as the compositional conditions may possibly fail badly for formulae in $M' \setminus M$). Therefore, it is enough to check that $\Theta$ is consistent. We will prove it by compactness. Let $\Theta_0 \subset \Theta$ be a finite subtheory. It is enough to check the consistency of $\Theta_0$. 
	
	Let $\phi_1, \ldots, \phi_n \in \form_{\LPA}(M)$ be all formulae such that the instances of  their compositionality, preservation, and disjunction schemes are in $ \Theta_0$. We will define $S' \subset M^2$ which satisfies these instances of schemes, as well as all the other axioms in $\Theta$. Note that $\ElDiag(M)$ will be automatically satisfied, since the interpretation of the relation  $S'$ will be defined in $M$.  
	
	Let us consider the classes $[\phi_i]_{\sim}$, where $\sim$ is the syntactic similarity relation  (see Definition \ref{def_syntactic_template} and the remark below the definition). Let $\lhd$ be a relation defined on the classes as follows: $[\phi] \lhd [\psi]$ iff there exist $\phi' \in [\phi], \psi' \in [\psi]$ such that $\phi'$ is a direct subformula of $\psi'$. We define a sequence of relations $S_0, S_1, \ldots, S_k \subset M^2$ by induction on the rank of classes in the relation $\lhd$. 	We let $(\phi,\alpha) \in S_0$ if $[\phi]$ is minimal in the relation $\lhd$ and one of the following holds:
	\begin{itemize}
		\item There exist $s,t \in \Term_{\LPA}(M)$ such that $\phi = (s=t)$ and $s^{\alpha} = t^{\alpha}$. 
		\item $(\phi,\alpha) \in S$.
		\item $\phi$ is a disjunction with nonstandardly many disjuncts.
	\end{itemize}
	Above, we want to define $S_0$ on all classes minimal with respect to the relation $\lhd$.  If $S$ already holds of $\phi$ and $\alpha$, then we preserve it in $S_0$. If $\phi$ happens to be a disjunction with infinitely many disjuncts, then we make it true under all valuations. Otherwise, we set $(\phi,\alpha) \notin S_0$ for all $\alpha$, but we do not have to mention it explicitly.
	
	We define $S_{i+1}$ as the union of $S_i$ and the pairs $(\phi,\alpha)$ such that $[\phi]$ has rank $i+1$ and one of the following holds:
	\begin{itemize}
		\item There exists $\psi \in \form_{\LPA}(M)$ such that $\phi = \neg \psi$ and $(\psi,\alpha) \notin S_i$.
		\item There exist $\psi, \eta \in \form_{\LPA}(M)$ such that $\phi = \psi \vee \eta$ and $(\psi,\alpha) \in S_i$ or $(\eta,\alpha) \in S_i$.
		\item There exist $\psi, \eta \in \form_{\LPA}(M)$ such that $\phi = \psi \wedge \eta$ and $(\psi,\alpha) \in S_i$ and $(\eta,\alpha) \in S_i$.
		\item There exists $\psi \in \form_{\LPA}(M), v \in \Var(M)$ such that $\phi = \exists v \psi$ and $(\psi,\beta) \in S_i$ for some $\beta \sim_v \alpha$.
		\item  There exists $\psi \in \form_{\LPA}(M), v \in \Var(M)$ such that $\phi = \forall v \psi$ and $(\psi,\beta) \in S_i$ for all $\beta \sim_v \alpha$.
	\end{itemize}
	In other words, we extend $S_i$ to $S_{i+1}$ so as to satisfy the compositional clauses. Finally, since we consider only finitely  many classes, they can only attain some finite rank $k$. Let $S' = S_k$. Let  $S'_{\phi}, \phi \in M$ be defined so that the definition-axiom of  $S'_{\phi}$ is satisfied. We claim that $(M,S', S'_{\phi})_{\phi \in M} \models \Theta.$
	
	It is obvious that $(M,S')$ satisfies $\ElDiag(M)$. It follows directly by construction that it satisfies the compositional clauses for the formulae $\phi_1, \ldots, \phi_n$. It satisfies the instances of the preservation scheme for these formulae: for the formulae of minimal rank it follows by the definition of $S_0$, since if a disjunction with infinitely many disjuncts is in the domain of $S$, then it is satisfied under all assignments. For formulae of higher rank, this follows by construction, since $S$ is compositional on its domain and the compositional clauses uniquely determine the extension of a satisfaction predicate on a formula, given its extension on the direct subformulae. 
	
	Let us verify that $S'$ satisfies the regularity axiom. For formulae in the $\lhd$-minimal classes, this follows by construction and the assumption that $S$ is regular. For formulae in the classes of rank $>0$, we can directly check that compositional clauses preserve regularity and thus show by induction that all the predicates $S_i$ satisfy the regularity axiom. 
	
	The model $(M,S'_{\phi})_{\phi \in M}$ satisfies the full induction scheme, since we can verify by an easy induction on $i$ that $S_i$ is arithmetically definable in terms of  $S_{\phi_i}$, $i \leq n$, such that the class $[\phi_i]$ is $\lhd$-minimal and $\phi_i \in \dom S$. By assumption, $(M,S_{\phi})_{\phi  \in M}$ satisfies the full induction scheme and since $S'$ is definable in that structure, it satisfies induction. The conclusion follows. 
	
	The predicate $S'$ satisfies the disjunction scheme for formulae $\phi_1,\ldots,\phi_n$. Indeed, if $\phi$ is a disjunction with a nonstandard number of disjuncts and $[\phi]$ is minimal in the relation $\lhd$, then $(\phi,\alpha) \in S'$ by construction. Let us check by induction that the disjunction scheme is satisfied for formulae whose classes have higher rank. If the rank of $[\phi]$ is $i+1$, then there exist formulae $\psi, \eta$ such that $\psi$ is a disjunction with a nonstandard number of disjuncts and $\phi = \psi \vee \eta$, where the rank of $[\psi]$ is $\leq i$. By induction hypothesis, $(\psi,\alpha) \in S'$ for all $\alpha \in \Asn(\psi)$, hence by the compositional clauses $(\psi \vee \eta,\beta) \in S'$ for all $\beta \in \Asn(\psi \vee \eta)$. This concludes the proof.
\end{proof} 

\begin{remark} \label{rem_sigma_n_correctness}
	As we have already pointed out, Enayat--Visser methods of building satisfaction classes typically allow us to combine various conservativeness results. For instance, in Theorem \ref{th_true_infinite_disjunctions}, we could additionally require that the constructed predicate is correct with respect to blocks of existential or universal quantifiers.
	
	It might appear that since a class we build in the proof of this theorem is somewhat pathological, there are certain obvious limits to what truth-theoretic principles can be additionally satisfied. For instance, if the constructed truth predicate makes true all infinite disjunctions, then it cannot agree with $\Sigma_n$ arithmetical truth predicates on sentences from the respective syntactic classes. However, as pointed out by Ali Enayat, we could still construct a model in which the truth predicate $T$ agrees with all the usual partial arithmetical $\Sigma_n$-truth predicates and $\DCin$ is satisfied. It is enough that in the proof of Lemma \ref{lem_induction_step_in_ev_chain} we change the definition of $S_0$ so that for a minimal class $[\phi]$, where $\phi$ is a $\Sigma_n$-formula for some standard $n$ which is not in the domain of $S$, $S_0$ agrees with the partial arithmetical satisfaction predicates. An alternative approach, mixing an Enayat--Visser approach and resplendence (by building a satisfaction class from recursively saturated partial classes) was proposed in an unpublished note by \cite{enayat_dc_intro}.
\end{remark}

\section{Alternative versions of  $\DC$} \label{sect_parentheses}

In our paper, we defined $\bigvee_{i \leq c} \phi_i$ as	$(( \ldots (\phi_0 \vee \phi_1) \vee \ldots )\vee \phi_{c-1}) \vee \phi_c$. This is not the only possible definition of disjunction (and admittedly not the most natural one) and one could wonder whether modifying that definition has some impact on the presented results.

This point has been partially addressed in the definition of outer disjunctions: our proof of nonconservativity of $\CT^- + \DCout$ used the clause 
\begin{equation} \label{equat_first_clause_outer_disj} \tag{*}
	TD(\phi_1, \ldots, \phi_c, \phi_{c+1}) \equiv TD(\phi_1,\ldots, \phi_c) \vee T\phi_{c+1}.
\end{equation}
It turns out that this assumption cannot be weakened much further. Let us define \df{balanced disjunctions} of a sequence of formulae $\phi_1, \ldots, \phi_c$, $B(\phi_1, \ldots, \phi_c)$ by recursion in the following way: 
\begin{eqnarray*}
	B(\emptyset) & : = & 0 \neq 0 \\
	B(\phi_1) & := & \phi_1 \\
	B(\phi_1, \ldots, \phi_{c}) & := & B(\phi_1, \ldots, \phi_{\llcorner c/2 \lrcorner})  \vee B(\phi_{\llcorner c/2 \lrcorner +1}, \ldots, \phi_c).
\end{eqnarray*}
In other words, we are grouping disjunctions of $\phi_1, \ldots, \phi_k$ so that the parentheses form a binary tree. Over $\CT^-$,  balanced disjunctions do not satisfy the condition \ref{equat_first_clause_outer_disj} and in fact, if we add an analogue of $\DCout$ for that kind of disjunctions, we do not gain any arithmetical strength. 

\begin{theorem} \label{th_balanced_dc_out_not_conservative}
	$\CT^-$ together with the axiom $\forall \bar{\phi} \in \SentSeq_{\LPA} \Big(T B (\bar{\phi}) \rightarrow \exists i \leq \lh(\bar{\phi}) \ T \phi_i \Big)$ is a conservative extension of $\PA$.
\end{theorem}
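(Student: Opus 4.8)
The plan is to mirror the conservativeness argument of Section~\ref{sect_dcin}, replacing the requirement ``all nonstandard disjunctions are true'' with its opposite: I would build a model in which every \emph{balanced} disjunction of nonstandard length is \emph{false}. Concretely, I would prove the following model-theoretic statement, from which the theorem follows exactly as Theorem~\ref{th_dcin_plus_int_conservative} followed from Theorem~\ref{th_true_infinite_disjunctions}: for every $M \models \PA$ there is an elementary extension $M' \succeq M$ and a full regular satisfaction class $S$ on $M'$ (satisfying internal induction, if one wishes to combine the result with $\INT$) such that $(B(\bar{\phi}),\emptyset) \notin S$ for every $\bar{\phi} \in \SentSeq_{\LPA}(M')$ of nonstandard length. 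Passing to the associated truth predicate via Proposition~\ref{prop_regular_satisfaction_equiv_ctminus}, the balanced variant of $\DCout$ then holds in $(M',T')$: for a sequence $\bar{\phi}$ of nonstandard length the antecedent $TB(\bar{\phi})$ simply fails, so the implication is vacuous, while for a sequence of standard length $c$ finitely many applications of the compositional disjunction clause give $TB(\bar{\phi}) \equiv \bigvee_{i \leq c} T\phi_i$, so the implication holds outright.

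To produce $(M',S)$ I would run the Enayat--Visser chain construction exactly as in the proof of Theorem~\ref{th_true_infinite_disjunctions}, building an elementary chain $(M_n,S_n)$ and invoking an analogue of Lemma~\ref{lem_induction_step_in_ev_chain} for the induction step. The only change is in the auxiliary theory $\Theta$: the Disjunction Scheme asserting $\forall \alpha \in \Asn(\phi)\ S'(\phi,\alpha)$ for nonstandard disjunctions $\phi$ is replaced by a \emph{False Balanced Disjunction Scheme} asserting $\forall \alpha \in \Asn(\phi)\ \neg S'(\phi,\alpha)$ for every $\phi = B(\phi_0,\ldots,\phi_c)$ with $c$ nonstandard. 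Correspondingly, in the rank-by-rank definition of the witnessing $S'$ on syntactic-similarity classes, I would modify the base case $S_0$ so that a $\lhd$-minimal class $[\phi]_{\sim}$ with $\phi$ a nonstandard balanced disjunction (and not already in $\dom(S)$) is declared \emph{false} under every assignment, rather than true. The invariant maintained along the chain becomes ``every nonstandard balanced disjunction in $\dom(S_n)$ is false,'' which is preserved precisely because the preservation scheme never forces such a formula to be true.

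The heart of the matter, and the step I expect to be the main obstacle, is checking that the False Balanced Disjunction Scheme is consistent with the compositional clauses, i.e. that the finite $S'$ built at each stage really does satisfy $\Comp(\phi_i)$ for all finitely many considered formulas while keeping the relevant balanced disjunctions false. The key structural fact is elementary but decisive: if $\ell$ is nonstandard then so are $\lfloor \ell/2\rfloor$ and $\lceil \ell/2 \rceil$, so a nonstandard balanced disjunction $\phi = B(\bar{\phi})$, whose top-level form is $B(\text{left}) \vee B(\text{right})$, has \emph{both} immediate disjuncts again balanced disjunctions of nonstandard length. Hence there is never a parent--child pair in which a nonstandard balanced disjunction sits directly above a standard-length subformula: the boundary between the nonstandard-length and the standard-length nodes of the balanced-disjunction tree is a cut, crossed by no single edge. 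Consequently, in the rank recursion a considered nonstandard balanced disjunction receives the value forced by $F \equiv F \vee F$ (its considered children being false by a lower-rank instance of the same rule, and its unconsidered children defaulting to false), so the compositional clause is satisfied with the parent false, matching the scheme. This is exactly the structural feature that \emph{fails} for left-grouped disjunctions, whose right disjunct is a single sentence and which therefore obey the one-step recursion~\ref{equat_first_clause_outer_disj} exploited in Section~\ref{sec_dcout}; its failure for balanced disjunctions is what opens the door to conservativeness.

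Finally I would discharge the routine verifications carried over from Lemma~\ref{lem_induction_step_in_ev_chain}: that $S'$ is regular, which holds because the value we assign depends only on the class $[\phi]_{\sim}$ and the length of a balanced disjunction (hence its standard-or-nonstandard status) is an invariant of that class; and that the expansion by the predicates $S'_\phi$ satisfies full induction, so that the limit $S = \bigcup_n S_n$ is a full regular satisfaction class, optionally with internal induction. Assembling these pieces yields $(M',T')$ as required, and hence the conservativeness of $\CT^-$ together with the balanced form of $\DCout$ over $\PA$.
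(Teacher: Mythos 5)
Your proposal is correct and takes essentially the same route as the paper, whose own (two-sentence) proof sketch is exactly this: run the construction of Theorem \ref{th_true_infinite_disjunctions} with the invariant ``every balanced disjunction of nonstandard length is false,'' which is maintainable precisely because of your key observation that both immediate disjuncts of a nonstandard balanced disjunction are again balanced disjunctions of nonstandard length. Your write-up merely fills in the details (the modified Disjunction Scheme in $\Theta$, the base case of the rank recursion, regularity, internal induction) that the paper leaves implicit in its sketch.
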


\begin{proof}[Sketch of a proof.]
	Analogously to the proof of Theorem \ref{th_dcin_plus_int_conservative}, we fix a model $M \models \PA$ and we construct an elementary extension $M' \succeq M$ and $T \subset M'$ such that any balanced disjunction of a sequence $(\phi_1,\ldots, \phi_c) \in \SentSeq_{\LPA}(M')$ of nonstandard length is false.  
	
	It is enough to observe that if $\phi$ is a balanced disjunction of nonstandardly many sentences, then there exist $\phi_1, \phi_2$ such  that $\phi = \phi_1 \vee \phi_2$ and $\phi_i$ are themselves such balanced disjunctions, so we can maintain the requirement that all sentences of this form are false throughout the whole construction. 
\end{proof}

Notice that in the model produced in the proof of Theorem \ref{th_dcin_plus_int_conservative}, all balanced disjunctions with nonstandardly many disjuncts are in fact true. This shows that both the analogues of $\DCin$ and $\DCout$ for the balanced formulae are conservative over $\PA$ and that in general  $\DCout$ is sensitive to the specific kind of forming disjunctions for which they are formulated. This leads to a natural question whether some natural forms of disjunction, say, balanced disjunctions, can satisfy full $\DC$ while remaining conservative. It turns out that $\DC$ behaves stably under such varying implementations. 

\begin{theorem} \label{th_invariant_dc}
	Suppose that $U$ is a theory extending $\CT^-$. Suppose that there exists a provably functional formula $D(x)$ such that provably in $U$ for any $\bar{\phi} = (\phi_1,\ldots, \phi_c) \in \SentSeq_{\LPA}$, $D(\bar{\phi})$ is in $\Sent_{\LPA}$ and the following holds:
	\begin{displaymath}
		TD(\bar{\phi}) \equiv \exists i \leq c \ T\phi_i.
	\end{displaymath}
	Then $U$ extends $\CT_0$.
\end{theorem}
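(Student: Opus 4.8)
The plan is to reduce the general statement to the concrete case already handled in Section~\ref{sec_dcout}. The hypothesis gives us a provably functional $D(x)$ which, provably in $U$, produces a genuine $\LPA$-sentence $D(\bar\phi)$ out of any coded sequence $\bar\phi = (\phi_1,\ldots,\phi_c)$ of sentences, and satisfies the full biconditional $TD(\bar\phi) \equiv \exists i \leq c\; T\phi_i$. In particular $D$ satisfies both directions of disjunctive correctness: the ``$\DCout$ direction'' $TD(\bar\phi) \rightarrow \exists i \leq c\; T\phi_i$ and the ``$\DCin$ direction'' $\exists i \leq c\; T\phi_i \rightarrow TD(\bar\phi)$. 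The strategy is to verify that $D$ is an \emph{outer disjunction} in the sense of Definition~\ref{defi_outer_disjunction}, and then invoke Proposition~\ref{prop_outer_disjunctions}, which says that any theory with outer disjunctions satisfies $\SeqOInd$ and hence contains $\CT_0$.

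To apply Proposition~\ref{prop_outer_disjunctions} I must check the two defining clauses of an outer disjunction. The second clause is precisely the $\DCout$ direction of the given biconditional, so it holds immediately. The first clause is the recursive coherence condition
\begin{displaymath}
	TD(\bar\phi \frown \tuple\psi) \equiv TD(\bar\phi) \vee T\psi.
\end{displaymath}
Here I would argue as follows: working in $U$, fix $\bar\phi = (\phi_1,\ldots,\phi_c)$ and $\psi$. By the full biconditional applied to the sequence $\bar\phi \frown \tuple\psi$, we have $TD(\bar\phi \frown \tuple\psi) \equiv \exists i \leq c+1\; T\chi_i$, where $\chi_i = \phi_i$ for $i \leq c$ and $\chi_{c+1} = \psi$. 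The existential $\exists i \leq c+1\; T\chi_i$ is, provably in $\PA$ and hence in $U$, equivalent to $(\exists i \leq c\; T\phi_i) \vee T\psi$. Applying the full biconditional again, this time to the sequence $\bar\phi$, the first disjunct is equivalent to $TD(\bar\phi)$. Combining these gives exactly the first clause. This verification is purely propositional manipulation of bounded existentials together with two instances of the hypothesis, so it requires no induction beyond what $\CT^-$ already contains.

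The main subtlety I anticipate is making sure the bounded-existential splitting $\exists i \leq c+1\; T\chi_i \equiv (\exists i \leq c\; T\phi_i) \vee T\psi$ is legitimate in the ambient theory. This is an arithmetical equivalence about the witness range $i \leq c+1$, and since $U$ extends $\CT^-$ it has full arithmetical induction, so the equivalence is unproblematic; the point is simply that $T$ is applied only to the individual sentences $\chi_i$, not under any syntactic operation, so no compositional reasoning about $D(\bar\phi \frown \tuple\psi)$ versus $D(\bar\phi)$ as \emph{syntactic objects} is needed. Once both clauses of Definition~\ref{defi_outer_disjunction} are confirmed, Proposition~\ref{prop_outer_disjunctions} directly yields $\SeqOInd$ and hence $\CT_0 \subseteq U$, completing the argument. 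The content of the theorem is thus the observation that the nonconservativeness proof of Section~\ref{sec_dcout} never used the specific left-nested parenthesisation of $\bigvee\bar\phi$: it only used that the disjunction-forming operation satisfies the two outer-disjunction clauses, which any $D$ satisfying the full $\DC$ biconditional automatically does.
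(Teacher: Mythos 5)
Your proposal is correct and takes exactly the same route as the paper: the paper's proof consists of the single observation that, under the hypotheses, $D$ is an outer disjunction provably in $U$, after which Proposition~\ref{prop_outer_disjunctions} yields $\SeqOInd$ and hence $\CT_0$. Your verification of the two clauses of Definition~\ref{defi_outer_disjunction} (the second being immediate from the biconditional, the first following from two applications of the biconditional plus the trivial splitting of the bounded existential over $\bar\phi\frown\tuple{\psi}$) is precisely the routine checking the paper leaves implicit.
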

\begin{proof}
	Under the assumptions of the theorem, $D$ is an outer disjunction provably in $U$. The conclusion follows by Proposition \ref{prop_outer_disjunctions}.
\end{proof}

\begin{remark} \label{rem_selective_disj}
	As we have noted above, the results on $\DCout$ are sensitive to how we exactly define disjunctions over finite sets of sentences. One way to express abstractly what properties of disjunctions are used is the notion of outer disjunction. Ali Enayat has proposed a further generalisation of this concept. 
	
Let $U$ be a theory extending $\CT^-$. 	We say that a provably functional formula $D$ is a \df{selective disjunction} if there exists a formula $F$ which provably in $U$ defines a choice function for finite sets and the following conditions hold provably in $U$:
\begin{itemize}
	\item For any $\Phi \in \SentSet_{\LPA}$, $D(\Phi) \in \Sent_{\LPA}$.
	\item $T D(\Phi) \equiv T F(\Phi) \vee T D(\Phi \setminus \{F(\phi)\}).$ 
\end{itemize}
One can check that if a theory $U$ has selective disjunctions satisfying the analogue of  $\DCout$, namely:
\begin{displaymath}
	TD(\Phi) \rightarrow \exists \phi \in \Phi \ T\phi,
\end{displaymath}
then $U$ is again equivalent to $\CT_0$. 

\end{remark}

\section{The role of the regularity axiom} \label{sec_regularity}

In Section \ref{sec_prelim}, we briefly mentioned that we have certain technical reasons to include Regularity, $\REG$ among the axioms for the compositional truth. Let us now explain why we adopt this principle and how this choice affects our results. 

 First of all, without $\REG$, we have to be careful about the exact formulation of the compositional axioms for quantifiers. Our basic choice is between two options. We can require that, say, an existential statement $\exists v \phi(v)$ is true iff some sentence obtained by substituting a numeral in $\phi(v)$ is true. This amounts to adopting quantifier axioms in the  form which we have chosen in this article:
 \begin{displaymath}
 	T \exists v \phi(v) \equiv \exists x \ T \phi(\num{x}).
 \end{displaymath}
  The second option is to require that an existential statement is true iff a sentence obtained by substituting some closed arithmetical term for the quantified variable is true. Then, the quantifier axioms would have a form:
 \begin{displaymath}
 	T \exists v \phi(v) \equiv \exists t \in \ClTerm_{\LPA} \ T\phi(t).
 \end{displaymath} 
  Note that these two versions of the quantifier axiom   are not immediately comparable with respect to their strength. The left-to-right implication is stronger in the numeral version:
 \begin{displaymath}
 	T \exists v \phi(v) \rightarrow \exists x T \phi(\num{x})
 \end{displaymath}
 whereas the reverse implication is formally stronger for the term version:
 \begin{displaymath}
 	\exists t \in \ClTerm_{\LPA} \ T \phi(t) \rightarrow T \exists v\phi(v).
 \end{displaymath}
 
  The term version was chosen, for instance, by \cite{halbach}. The numeral version appears, e.g., in \citep{friedman_sheard} or \citep{horsten_leigh_truth_is_simple}. This choice, in  turn, may have a significant bearing on our results. Most importantly, we do not know whether adding $\Delta_0$-induction to a version of $\CT^-$ whose compositional axioms for quantifiers involve terms results in a non-conservative extension of $\PA$. So, when regularity is missing, some of the main results in this article will depend on a technical choice in the formulation of compositional axioms which may lead to some further confusions. 

Another issue is that without regularity, there is a mismatch between the notion of a truth predicate as discussed in the philosophical literature and the notion of satisfaction classes, as discussed in the literature on models of $\PA$ (e.g., as in \citep{kaye}). There is a direct link between the compositional truth and satisfaction classes assuming that we include some form of extensionality conditions in both these cases. (A more comprehensive discussion of why these kind of assumptions are relevant can be found in \citep{wcislo_definability_automorphisms}.) This link is important, as some techniques used in our proofs are designed to work specifically in the context of satisfaction classes. Most importantly, the conservativity arguments using the Enayat--Visser technique which we employ in Section \ref{sect_dcin} do not really work when directly applied to truth classes. Therefore, we would most likely need to add regularity assumptions to statements of some technical lemmas which could make for a potentially awkward reading.

Finally, let us discuss the impact of $\REG$ on the results in this article. The main results in Section \ref{sect_dcin}, namely Theorems \ref{th_dcin_plus_int_conservative} and \ref{th_true_infinite_disjunctions} are also true for $\CT^-$ without the regularity axiom, as the latter versions are formally weaker. The same applies to Remark \ref{rem_sigma_n_correctness},  Proposition \ref{prop_qfc_conservative}, and Theorem \ref{th_balanced_dc_out_not_conservative}. 

 The situation of the results in Section \ref{sec_dcout} is slightly more complicated. As we have already mentioned, Theorem \ref{th_many_faces} is only known to hold if we consider a version of $\CT^-$ in which the numeral variant of the compositional quantifier axioms is assumed. In this case, all results from Section \ref{sec_dcout}, still hold. Unfortunately, this is not necessarily true for the term variant. Let us discuss in some detail what can be salvaged in this scenario. 

Theorem \ref{th_dcout_implies_sind} does not depend at all on quantifier axioms and thus it holds without assuming regularity in either version of $\CT^-$; similarly for Theorem \ref{th_seqind_implies_dcin}. The first part of Theorem \ref{th_dc_implies_seqoind} and Theorem \ref{th_sind_has_outer_disjunction} also hold true. It is clear that $\SeqOInd$ implies $\PropRef$ for $\CT^-$ in the term version as well. Moreover, $\PropRef$ clearly implies $\DC$ using just compositional axioms for boolean connectives.  However, it is unclear whether $\PropRef$ and $\CT_0$ are equivalent without assuming $\REG$ and with the quantifier axioms in the term version. Similarly, the first part of Proposition \ref{prop_outer_disjunctions} does not depend of regularity or the quantifier axioms, but we do not know whether such theories contain $\CT_0$. Proposition \ref{prop_qfc_plus_prop_has_outer_dijunctions} may be entirely false if we do not assume the numeral version of the quantifier axioms, as the proof relies significantly on that assumption. 

To sum it up: by the results of Section \ref{sec_dcout}, $\DCout, \DC, \SeqInd$, $\SeqOInd$, and $\PropRef$ are pairwise equivalent over any version of $\CT^-$ without assuming regularity. They are equivalent to $\CT_0$ and not conservative over $\PA$ if we consider $\CT^-$ with quantifier axioms for numerals. If we consider $\CT^-$ with the term variant of the quantifier axioms, we do not know either whether $\DC$ or $\PropRef$ are equivalent to $\CT_0$, or whether the latter theory is arithmetically stronger than $\PA$. 

\section*{Appendix: a glossary of formalised notions}
Throughout the article, we referred to a number of formalised notions and used some rather technical notation. Let us now gather it in a glossary for the convenience of the reader. 

\begin{itemize}
	\item $\alpha \sim_v \beta$ means that $\dom(\beta) \supseteq \dom(\alpha) \cup \{v\}$ and $\beta(w) = \alpha(w)$ for every $w \in \dom(\alpha) \setminus \{v\}$.
	\item $\val{t}$. If $t$ is (a G\"odel code of) a closed arithmetical term, then $\val{t}$ is the value of a term (whose G\"odel code is) $t$. We use the same expression to denote the formalised version of this function.
	\item $t^{\alpha}$. If $t$ is (a G\"odel code of) an arithmetical term, and $\alpha$ is a $t$-assignment, then $t^{\alpha}$ is the value of term $t$ under the assignment $\alpha$. We use the same expression to denote the formalised version of this function.  
		\item $\val{\bar{t}}$. If $\bar{t}$ is a sequence of (G\"odel codes of) closed arithmetical terms, then $\val{\bar{t}}$ is the sequence of their values. We also use this expression to denote the formalised version of this function.
	\item $\num{x}$ is (a G\"odel code of) a canonical numeral denoting the number $x$. We also use this expression to denote the formalised version of this function.
	\item $\phi[\alpha]$. If $\phi$ is a formula and $\alpha \in \Asn(\phi)$, then by $\phi[\alpha]$ we mean a sentence obtained by substituting in $\phi$ the numeral $\num{\alpha(v)}$ for each variable $v$. We also use this expression to denote the corresponding formalised notion. 
	\item $\Asn(x)$ is a set of $x$-assignments, that is, functions whose domain contains the set of free variables of $x$, where $x$ is a term, a formula, or a sequence thereof. We also use this expression to denote the corresponding formalised notion.
	\item $\ClTerm_{\LPA}(x)$ is a formula expressing ``$x$ is (a G\"odel code of) a closed arithmetical term.'' (That is, a term with no free variables.)
	\item $\ClTermSeq_{\LPA}(x)$ is a formula expressing ``$x$ is a sequence of (G\"odel codes of) closed arithmetical terms.'' 
	\item $\FinSeq(x)$ is a formula expressing ``$x$ is a finite sequence of numbers.''
	\item $\form_{\LPA}(x)$ is a formula expressing ``$x$ is (a G\"odel code of) an arithmetical formula.''
	\item $\form^{\leq 1}_{\LPA}$ is a formula expressing ``$x$ is (a G\"odel code of) an arithmetical formula with at most one free variable.''
	\item $\formSeq_{\LPA}(x)$ is a formula expressing ``$x$ is a sequence of (G\"odel codes of) arithmetical formulae.'' 
	\item $\lh(s) = x$ is a formula expressing ``$s$ is a sequence and its length is $x$.''
	\item $\Pr_{\PA}(x)$ is a formula expressing ``$x$ is (a G\"odel code of) an arithmetical sentence provable in $\PA$.''
	\item $\Pr_{\Prop}(x)$ is a formula expressing ``$x$ is (a G\"odel code of) an arithmetical sentence which is provable in pure propositional logic.'' 
	\item $\Pr^{T}_{\Prop}(x)$ is a formula expressing ``$x$ is (a G\"odel code of) an arithmetical sentence which is provable in propositional logic from the set of premises $\Gamma$ such that $T(y)$ holds for all $y \in \Gamma$.''
	\item  $\qfSent_{\LPA}(x)$ is a formula expressing ``$x$ is (a G\"odel code of) a quantifier-free arithmetical sentence.''
	\item $\Sent_{\LPA}(x)$ is a formula expressing ``$x$ is (a G\"odel code of) an arithmetical sentence.''
	\item $\SentSeq_{\LPA}(x)$ is a formula expressing ``$x$ is a sequence of (G\"odel codes of) arithmetical sentences.''
	\item $\SentSet_{\LPA}(x)$ is a formula expressing ``$x$ is a finite set of (G\"odel codes of) arithmetical sentences.'' 
	\item $\Term_{\LPA}(x)$ is a formula expressing ``$x$ is (a G\"odel code of) an arithmetical term.''
	\item $\Tr_0$ is the arithmetical truth  predicate for $\Delta_0$-formulae.
	\item $\Var(x)$ is a formula expressing ``$x$ is (a G\"odel code of) a first-order variable.''

\end{itemize}

\section*{Acknowledgements}
We are very grateful to Ali Enayat for a number of helpful comments and suggestions which allowed us to improve this article. 

This research was supported by an NCN MAESTRO grant 2019/34/A/HS1/00399 ``Epistemic and Semantic Commitments of Foundational Theories.''

\end{document}